\DeclareSymbolFontAlphabet{\mathbb}{AMSb}
\DeclareSymbolFontAlphabet{\mathbbl}{bbold}
\newtheorem{lemma}{Lemma}[section]
\newtheorem{theorem}[lemma]{Theorem}
\newtheorem{proposition}[lemma]{Proposition}
\newtheorem{fact}[lemma]{Fact}
\newtheorem{notation}[lemma]{Notation}
\newtheorem{remark}[lemma]{Remark}
\newtheorem{question}{Question}
\newtheorem{assumption*}{Assumption}
\newtheorem{definition}[lemma]{Definition}
\numberwithin{equation}{section}
\newcommand{\N}{\mathbb{N}}
\newcommand{\bs}{\backslash}
\newcommand\CC{{\mathcal C}}
\newcommand\GG{{\mathcal G}}
\newcommand\HH{{\mathcal H}}
\newcommand\TT{{\mathcal T}}
\newcommand\into{\hookrightarrow}
\newcommand{\tile}{\mathfrak t}
\def\Ind#1#2{#1\setbox0=\hbox{$#1x$}\kern\wd0\hbox to 0pt{\hss$#1\mid$\hss}
\lower.9\ht0\hbox to 0pt{\hss$#1\smile$\hss}\kern\wd0}
\def\notind#1#2{#1\setbox0=\hbox{$#1x$}\kern\wd0
\hbox to 0pt{\mathchardef\nn=12854\hss$#1\nn$\kern1.4\wd0\hss}
\hbox to 0pt{\hss$#1\mid$\hss}\lower.9\ht0 \hbox to 0pt{\hss$#1\smile$\hss}\kern\wd0}
\author{Samuel Braunfeld\affiliationmark{1}}
\title{The undecidability of joint embedding and joint homomorphism for hereditary graph classes}
\affiliation{ University of Maryland, College Park}
\keywords{graphs, undecidable, joint embedding, atomic}
\begin{document}
\publicationdetails{21}{2019}{2}{9}{5325}
\maketitle

\begin{abstract}
 We prove that the joint embedding property is undecidable for hereditary graph classes, via a reduction from the tiling problem. The proof is then adapted to show the undecidability of the joint homomorphism property as well.
\end{abstract}


\section{Introduction}

A hereditary class $\CC$ of structures has the {\em joint embedding property (JEP)} if, given $A, B \in \CC$, there exists $C \in \CC$ such that $A,B$ embed into $C$. Among other things, this is equivalent to whether $\CC$ is {\em atomic}, i.e. cannot be expressed as a union of two proper hereditary subclasses. A general strategy for understanding a hereditary class is to reduce this to understanding its atomic subclasses,  as in the following lemma for calculating growth rates in permutation classes (see \cite{Vatter} for a reference).
  
  \begin{lemma}
   Suppose $\CC$ is a permutation class, with no infinite antichain in the containment order. Then $\CC$ can be expressed as a finite union of atomic subclasses. Furthermore, the upper growth rate of $\CC$ is equal to the maximum upper growth rate among its atomic subclasses. 
   \end{lemma}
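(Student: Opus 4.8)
The plan is to establish the two assertions in order: first that $\CC$ is a finite union of atomic subclasses, and then, using any such decomposition, that the upper growth rate of $\CC$ equals the maximum of the upper growth rates of its atomic subclasses. Note first that a proper pattern of a permutation is strictly shorter, so the containment order has no infinite strictly descending chains; together with the hypothesis that $\CC$ contains no infinite antichain, this says $\CC$ is well-quasi-ordered under containment, and the same holds for every hereditary subclass of $\CC$ (a subset of a well-quasi-order is well-quasi-ordered, and a downset of a downset is a downset).

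The structural heart of the first assertion is that the poset $D(\CC)$ of hereditary subclasses of $\CC$, ordered by inclusion, is well-founded. To see this, suppose there were an infinite strictly descending chain $\DD_0 \supsetneq \DD_1 \supsetneq \DD_2 \supsetneq \cdots$, and for each $i$ pick $\sigma_i \in \DD_i \setminus \DD_{i+1}$. If $i < j$ and $\sigma_i$ were a pattern of $\sigma_j$, then since $\sigma_j \in \DD_j \subseteq \DD_{i+1}$ and $\DD_{i+1}$ is hereditary we would get $\sigma_i \in \DD_{i+1}$, a contradiction; hence $\{\sigma_i : i \in \N\}$ is an infinite antichain, contradicting the hypothesis on $\CC$. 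With $D(\CC)$ well-founded, I prove by induction along it that every hereditary subclass of $\CC$ is a finite union of atomic subclasses. Given such a subclass $\DD$: if $\DD$ is atomic we are done; otherwise $\DD = \EE_1 \cup \EE_2$ for proper hereditary subclasses $\EE_1,\EE_2$ of $\DD$ (the definition of non-atomic), and each $\EE_i$ is again a hereditary subclass of $\CC$ lying strictly below $\DD$ in $D(\CC)$, so by the induction hypothesis each $\EE_i$ — and hence $\DD$ — is a finite union of atomic subclasses. Applying this to $\DD = \CC$ finishes the first assertion. (Alternatively: the rooted binary tree of successive decompositions is finitely branching and, by the argument just given, has no infinite branch, so it is finite by K\"onig's lemma, and its leaves are the desired atomic subclasses.)

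For the second assertion, write $g(\DD) = \limsup_{n\to\infty} \abs{\DD_n}^{1/n}$ for the upper growth rate, where $\DD_n$ is the set of permutations in $\DD$ of length $n$, and fix a decomposition $\CC = \AA_1 \cup \cdots \cup \AA_k$ into atomic subclasses from the first part. From $\abs{\CC_n} \le \sum_{i=1}^{k}\abs{(\AA_i)_n} \le k\max_i \abs{(\AA_i)_n}$, taking $n$-th roots and letting $n\to\infty$ (using $k^{1/n}\to 1$ and the routine identity $\limsup_n \max_i a^{(i)}_n = \max_i \limsup_n a^{(i)}_n$ for finite families of sequences) gives $g(\CC) \le \max_i g(\AA_i)$; conversely $\AA_i \subseteq \CC$ forces $g(\AA_i) \le g(\CC)$, so $g(\CC) = \max_i g(\AA_i)$. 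Since each $\AA_i$ is an atomic subclass of $\CC$ while every atomic subclass $\AA \subseteq \CC$ satisfies $g(\AA) \le g(\CC)$, the supremum of $g$ over the atomic subclasses of $\CC$ is attained and equals $g(\CC)$, which is the claim.

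I expect the only genuine obstacle to be the well-foundedness of $D(\CC)$; everything after that — the inductive decomposition and the growth-rate arithmetic — is routine. The subtle point within that step is the conversion of a strictly descending chain of hereditary subclasses into an infinite antichain, which is precisely where the ``no infinite antichain'' hypothesis (rather than merely well-foundedness of the containment order) is used.
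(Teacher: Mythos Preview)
The paper does not actually prove this lemma; it is stated in the introduction as background with a reference to \cite{Vatter}, so there is no in-paper proof to compare your proposal against.

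Your argument is correct and is the standard one. One small imprecision worth tightening: from ``no $i<j$ with $\sigma_i$ a pattern of $\sigma_j$'' you conclude that $\{\sigma_i\}$ is an infinite antichain, but what you have actually produced is an infinite \emph{bad sequence}; it could in principle contain pairs with $\sigma_j$ a pattern of $\sigma_i$ for $i<j$. Since you already observed at the outset that $\CC$ is well-quasi-ordered (well-founded plus no infinite antichain), the contradiction is immediate anyway --- a wqo admits no infinite bad sequence --- or alternatively invoke well-foundedness together with Ramsey to extract an infinite antichain from the bad sequence. Either phrasing completes the step. The growth-rate half, including the identity $\limsup_n \max_i a^{(i)}_n = \max_i \limsup_n a^{(i)}_n$ for finite index sets, is routine and correct.
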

   
   Our main theorem is the following.
      
        \begin{theorem} \label{theorem:inducedJEP0}
       There is no algorithm that, given a finite set of forbidden induced subgraphs, decides whether the corresponding hereditary graph class has the JEP.
        \end{theorem}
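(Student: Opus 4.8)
The plan is to reduce from the \emph{tiling problem}: given a finite set of Wang tiles, decide whether it admits a valid tiling of the plane. This is well known to be undecidable, and by a standard compactness (K\"onig's lemma) argument it has the equivalent reformulation that a tile set tiles the plane if and only if it admits a valid tiling of the $n\times n$ array for every $n$. I would produce, computably from a tile set $\mathcal{T}$, a finite set $F = F(\mathcal{T})$ of graphs such that the hereditary class $\CC := \mathrm{Forb}(F(\mathcal{T}))$ has the JEP if and only if $\mathcal{T}$ tiles the plane; Theorem~\ref{theorem:inducedJEP0} then follows, since a decision procedure for the JEP would yield one for the tiling problem.

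The combinatorial core is to encode tiled grid fragments as finite graphs. A cell of the grid becomes a vertex carrying a cluster of small rigid \emph{gadget} subgraphs that record (i) which tile occupies the cell, (ii) orientation data distinguishing its four sides, and (iii) four ``ports'', one per side, each of which must either be linked to a neighbouring cell's complementary port or be closed off by a boundary ``cap''; the grid-adjacency relations and the colour shared across each linked pair of ports are themselves realised by gadgets. Since plain graphs possess neither colours nor directed adjacencies, these gadgets --- built from asymmetric pieces such as paths and cliques of pairwise distinct sizes --- are exactly what let us simulate the missing structure. The set $F$ then collects all the \emph{local illegalities}: a cell whose tile, orientation data, or ports are ill-formed or inconsistent; a port that is neither linked nor capped; a linked pair of ports carrying mismatched colours; two cells improperly sharing a would-be common neighbour; and so on. Because an uncapped port obliges a neighbouring cell to exist, finitely many \emph{forbidden} patterns can in this way \emph{force} a grid fragment in $\CC$ to grow out until it closes off into a fully filled rectangle; further disconnected patterns in $F$ restrict how two such fragments may sit inside a single graph.

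With the encoding in hand, the equivalence is argued in two directions. If $\mathcal{T}$ tiles the plane, then $\CC$ contains arbitrarily large validly tiled rectangles; one arranges the encoding so that these, together with a fixed ``harmless'' part of $\CC$ that amalgamates freely, are cofinal, whence any two members of $\CC$ embed into a common one and $\CC$ has the JEP. Conversely, if $\mathcal{T}$ does not tile the plane, the compactness reformulation yields an $N$ for which no $N\times N$ array can be validly tiled; one then exhibits two graphs $A,B \in \CC$ --- built from ``always available'' partial pieces together with boundary and anchor gadgets --- so that the rigidity imposed by the forbidden patterns forces every common extension $C \in \CC$ of $A$ and $B$ to contain a validly tiled $N\times N$ grid. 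No such grid exists, so $A$ and $B$ have no common extension and the JEP fails.

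I expect the main obstacle to be the gadget engineering, in two intertwined respects. First, one must simulate the non-graph structure --- tile labels, edge colours, the up/down/left/right orientation --- \emph{faithfully using only finitely many forbidden induced subgraphs}, which forces the gadgets to be rigid and mutually unconfusable and the list of local illegalities to be genuinely exhaustive. Second, and more delicately, one must keep tight control of \emph{which} graphs actually lie in $\CC$: besides the intended grid fragments and the harmless part, there will be graphs that avoid every pattern in $F$ yet encode nothing sensible --- ``junk'' local configurations that complete no tiling, or wrap-around/cylindrical configurations that fail to be planar --- and the whole argument rests on these not disturbing the correspondence between the global property (the JEP) and the tiling property. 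Closely related is a balancing act in the choice of markers: one needs enough rigidity to make the obstruction pair $A,B$ genuinely non-amalgamable in the non-tiling case, but enough flexibility that in the tiling case everything really does embed into one large tiled rectangle. The remaining ingredients --- the compactness reformulation, the computability of $\mathcal{T}\mapsto F(\mathcal{T})$, and the final reduction --- I expect to be routine.
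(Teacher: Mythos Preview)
Your high-level plan --- reduce from the tiling problem, encode grids and tiles via gadgets, and arrange that JEP holds iff the tiling problem is solvable --- matches the paper's strategy. But the mechanism you propose differs from the paper's in an essential way, and your version has a real gap in the direction ``tiling exists $\Rightarrow$ JEP''.

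You propose that the graphs in $\CC$ encode already-tiled grid fragments, and that when a tiling exists, large tiled rectangles are cofinal in $\CC$, giving JEP. This cofinality claim is where things break. Your class $\CC$ will inevitably contain fragments tiled according to \emph{different} local patterns (any tile set with more than one tile admits multiple small partial tilings), and a fragment tiled one way need not embed as an induced subgraph into a large rectangle tiled another way, since the tiling is part of the graph structure. Worse, you explicitly need disconnected forbidden patterns to prevent disjoint union from trivially witnessing JEP in the non-tiling case; those same patterns will obstruct jointly embedding two differently-tiled fragments in the tiling case. You acknowledge the ``balancing act'' but do not resolve it, and I do not see how to make the cofinality route go through.

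The paper sidesteps this entirely by \emph{separating the grid from the tiles}. Its two canonical models are an \emph{untiled} grid $A^*$ (path vertices with a marked origin, and grid vertices with coordinate projections onto the path) and a \emph{tile supply} $B^*$ (a parallel copy of the grid with a full tileset hanging off each grid vertex), living in disjoint families of unary predicates. Only two of the constraints force the presence of edges --- one says a grid origin must be tiled from a full tileset at a $B^*$-origin, the other propagates tiling to horizontal and vertical successors --- and both fire only when $A^*$-type and $B^*$-type structure are simultaneously present. The ``tiling $\Rightarrow$ JEP'' direction is then handled not by cofinality but by a \emph{uniform procedure}: given arbitrary $A,B\in\GG_\TT$, form $A\sqcup B$ and, using one fixed solution $\theta$ of $\TT$, add exactly those $G^0$--$T^1$ edges that $\theta$ dictates at each grid vertex with well-defined coordinates. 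Because the constraints force unique predecessors and unique projections, coordinates are well-defined wherever they exist, so $\theta$ gives a consistent tile everywhere it is needed, and one checks constraint by constraint that nothing is violated. The ``junk'' problem evaporates: the procedure works on \emph{every} pair, not just the canonical ones, and there is no need to control what the general member of the class looks like.

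Finally, the passage to the pure graph language is a separate, modular step --- encode each unary predicate by freely attaching a fixed $2$-connected pointed graph from an antichain (wheel graphs, in the paper) --- rather than being baked into the cell-and-port gadgetry from the start. This decoupling is what keeps the constraint list and the verification manageable.
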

        
        Our next result considers a variation on the JEP called the joint homomorphism property, which is of interest in infinite-domain constraint satisfaction problems \cite{Bodthes}. Modifying our proof of Theorem \ref{theorem:inducedJEP0} gives the following, answering a question of Bodirsky (personal communication).  
        
          \begin{theorem}
         There is no algorithm that, given a finite set of forbidden induced subgraphs, decides whether the corresponding hereditary graph class has the joint homomorphism property.
          \end{theorem}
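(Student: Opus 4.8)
The plan is to adapt the tiling reduction underlying Theorem~\ref{theorem:inducedJEP0}. Recall that from a tiling instance $T$ that proof produces a finite set $\mathcal{F}_T$ of forbidden induced subgraphs, computable from $T$, such that $\mathrm{Forb}(\mathcal{F}_T)$ has the JEP if and only if $T$ satisfies the relevant tiling condition (say, tiles the plane — the direction is immaterial for undecidability, since both the tiling problem and its complement are undecidable). The graphs of $\mathrm{Forb}(\mathcal{F}_T)$ that carry the content are assembled from a fixed finite palette of gadgets — one family encoding the tiles of $T$, others encoding the horizontal and vertical adjacency constraints — glued so that the ``large'' members of the class are precisely the correctly $T$-tiled grid fragments. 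To obtain the joint-homomorphism version I would keep this skeleton but redesign the gadgets so that, inside the class, every homomorphism between the structures that matter is forced to be an embedding; once that is arranged, a joint homomorphism of two class members into a third is essentially a joint embedding and the JEP argument transfers verbatim.

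Concretely: (i) attach to each gadget a clique $K_m$, for a fixed $m$ exceeding the clique number of everything the construction intends to allow, with a prescribed attachment pattern, and add $K_{m+1}$ together with a short list of ``folded'' configurations to the forbidden set. Since $K_m$ has chromatic number $m$, any homomorphism from a gadget into a member of the new class $\mathrm{Forb}(\mathcal{F}'_T)$ is injective on that clique, and — by choosing the attachment pattern and the extra forbidden configurations carefully — injective on the whole gadget, hence an isomorphism onto an induced subgraph. This pins every homomorphism between the relevant structures down to an embedding. (ii) Verify that $\mathcal{F}'_T$ is still finite and computable from $T$, and that the intended ``large'' members of the class are unchanged except for the attached cliques (so that the tiled grid fragments, and only those, survive). (iii) Rerun the proof of Theorem~\ref{theorem:inducedJEP0} with ``embedding'' replaced by ``homomorphism'' throughout: by (i) this is the same argument, so $\mathrm{Forb}(\mathcal{F}'_T)$ has the joint homomorphism property if and only if $T$ satisfies the tiling condition, and undecidability of the tiling problem gives the theorem.

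The main obstacle is step (i). A homomorphism is more permissive than an embedding in two ways: it may identify vertices, and it may send a non-edge to an edge. The clique trick handles the first cleanly — a core cannot fold a large clique, and the forbidden $K_{m+1}$ keeps clique numbers bounded — but ruling out that a homomorphism introduces spurious edges is more delicate. One needs the gadgets to be close to edge-maximal within the class, or one needs a handful of additional forbidden configurations that certify ``the images of these two gadget-vertices are non-adjacent.'' The real work is to choose these extra constraints so that they are simultaneously finite in number, harmless to the tiled-grid members one wants to keep, and strong enough to force injectivity together with inducedness; granting that, everything downstream is a routine re-reading of the JEP proof.
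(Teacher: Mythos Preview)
Your high-level strategy matches the paper exactly: keep the tiling reduction, and rig the canonical models so that any homomorphism into a class member must in fact be an embedding, whence JHP for the modified class is equivalent to JEP and the old argument applies. Where you diverge is in the concrete mechanism, and here there is a genuine gap.

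Attaching a clique $K_m$ to each gadget and forbidding $K_{m+1}$ does force any homomorphism to be injective \emph{on that clique}, but it does nothing to prevent two distinct basepoints $u,v$ (each carrying its own $K_m$) from being identified, or from having an edge inserted between them: the image just has one vertex with two $K_m$'s hanging off it, or two adjacent vertices each with a $K_m$, and neither configuration contains $K_{m+1}$. So the clique device controls folding \emph{inside} a gadget but not the relations \emph{between} gadgets, which is exactly where the tiling information lives. You acknowledge that non-edge preservation is ``the real work'' and leave it to unspecified ``additional forbidden configurations''; that is the missing idea, not a routine verification.

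The paper's device is pointed and worth knowing. Over every non-adjacent pair of basepoints in the canonical models it freely joins a copy of $W_5$, identifying the pair with two non-adjacent vertices of $W_5$; it then forbids $K_4$. Since every proper homomorphic image of $W_5$ contains $K_4$, identifying the two basepoints or inserting an edge between them is impossible. For rigidity within a single coding graph, the paper uses that its antichain elements $G_i=W_{2i+5}$ are odd wheels, hence cores, and additionally forbids all proper homomorphic images of the finitely many $G_i$ in play (and checks these images, being $2$-connected, cannot sneak into the canonical models or be created by the joint-embedding procedure). The combination --- $W_5$ over non-adjacent pairs to protect non-edges and distinctness, core antichain elements plus forbidden proper images to protect each $G_i$ --- is what makes step~(i) go through; your clique proposal does not supply a substitute for the first half of this.
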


Theorem \ref{theorem:inducedJEP0} is first proven for graphs enriched by a sufficient supply of unary predicates, and then a formal reduction to the pure graph language is given. A very rough sketch of the proof is as follows. The first two steps ensure that the tiling problem is equivalent to whether we can jointly embed two particular graphs, and the third step ensures that joint embedding for the class is equivalent to joint embedding for those two graphs.
        
        \begin{enumerate}
        \item Construct two graphs $A^*$, representing a grid, and $B^*$ representing a suitable collection of tiles.
        \item Choose a finite set of constraints to ensure that successfully joint embedding $A^*$ and $B^*$ encodes a solution to the tiling problem.
        \item Show that if the tiling problem admits a solution, then the chosen class admits a joint embedding procedure.
        \end{enumerate}
        
The JEP for a class $\CC$ of finite structures specified by forbidden substructures is also equivalent to $\CC$ admitting a universal object for finite structures, i.e. a countable structure avoiding the forbidden substructures and into which all members of $\CC$ embed. In graph classes, the existence of a universal object for {\em countable} structures, i.e. a countable graph avoiding the forbidden substructures and into which all other such countable graphs embed, has received much attention. In particular, in \cite{WQO} Cherlin proved the undecidability of the existence of a countable universal graph for hereditary graph classes, which serves as inspiration for the proof of Theorem \ref{theorem:inducedJEP0}.

This paper was motivated by a question of Ru\v{s}kuc on the decidability of atomicity for finitely-based permutation classes \cite{Rusk}, viewing permutations as structures in a language of two linear orders. Based on our results, we believe there is a strong possibility Ru\v{s}kuc's problem is undecidable, although it is not yet clear whether our methods are sufficient to show this. However, in forthcoming work, we intend to adapt our arguments to show the undecidability of the corresponding problem for classes of structures in a language of three linear orders.

\section{The tiling problem} \label{sec:Tiling}

Rather than using a reduction from the halting problem to prove undecidability, we will use tiling problems. The input to a tiling problem consists of a finite set $Tiles$ of tile types, as well as a set of rules of the form ``Tiles of type $i$ cannot be placed directly above tiles of type $j$'' and ``Tiles of type $k$ cannot be placed directly right of tiles of type $\ell$''. A solution to a tiling problem is a function $\tau \colon \N^2 \to Tiles$, interpreted as placing tiles on a grid, that respects the tiling rules. 

\begin{theorem}[\cite{Berger}]
There is no algorithm that, given a set of tile types and tiling rules, decides whether the corresponding tiling problem has a solution.
\end{theorem}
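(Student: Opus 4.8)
The plan is to derive the stated $\N^2$ result as a reduction from the $\Z^2$ domino problem, which is exactly the undecidable problem established by Berger, rather than attempting a direct reduction from Turing-machine halting (that route would force me to fix a seed tile at the origin and would only settle an \emph{origin-constrained} variant, not the problem as worded). First I would record that the rule format used here --- finitely many forbidden ``above'' and ``right'' pairs --- is at least as expressive as the classical Wang-tile formalism. Given a Wang tile set $W$ (unit squares with colored edges, where two tiles may be horizontally adjacent iff the shared edges agree, and likewise vertically), I set $Tiles = W$ and declare type $i$ forbidden directly above type $j$ exactly when the bottom edge of $i$ and the top edge of $j$ disagree, and type $k$ forbidden directly right of type $\ell$ exactly when the left edge of $k$ and the right edge of $\ell$ disagree. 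Under this translation a function $\tau \colon \N^2 \to Tiles$ respects the rules iff it is a valid Wang tiling of $\N^2$, so $W \mapsto (Tiles,\text{rules})$ is a computable many-one reduction.

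The key step is a compactness lemma bridging the two domains: a tile set admits a valid tiling of $\N^2$ if and only if it admits a valid tiling of $\Z^2$. The backward direction is immediate by restriction. For the forward direction, suppose $\tau \colon \N^2 \to Tiles$ is valid. Since the adjacency constraints are translation-invariant, translating the restriction of $\tau$ to $[0,2n]^2$ by $(-n,-n)$ yields a valid tiling of the centered square $[-n,n]^2$; hence every such square is tileable. A standard K\"onig's-lemma argument then produces a $\Z^2$-tiling: the valid tilings of the nested squares $[-n,n]^2$ form a finitely branching tree under restriction, every level is nonempty, so there is an infinite branch whose union tiles $\Z^2$.

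Combining these, $W$ tiles $\Z^2$ iff the constructed instance $(Tiles,\text{rules})$ has a solution $\tau \colon \N^2 \to Tiles$. Hence any algorithm deciding the $\N^2$ tiling problem as stated would decide the $\Z^2$ domino problem, contradicting Berger's theorem. The main obstacle --- indeed the only genuinely nontrivial ingredient, since the format translation is syntactic and Berger's result is cited --- is the compactness lemma; the point to watch is that a tiling of $\N^2$ must be leveraged into tilings of arbitrarily large \emph{centered} squares via translation-invariance of the constraints, which is what legitimizes the passage to the full plane and aligns the reduction exactly with the $\Z^2$ problem that Berger solved.
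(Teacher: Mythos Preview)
The paper does not prove this theorem; it simply cites it as Berger's result and moves on. So there is no ``paper's own proof'' to compare against.

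Your proposal is nonetheless correct and worth recording. You have correctly noticed that the paper formulates the tiling problem over $\N^2$ while Berger's original theorem concerns $\Z^2$, and you bridge the gap cleanly: the syntactic translation from Wang tiles to the forbidden-pair format is routine, and the compactness lemma (an $\N^2$-tiling yields arbitrarily large centered squares by translation-invariance, hence a $\Z^2$-tiling via K\"onig's lemma) is the standard and correct way to pass between the two domains. This is exactly the justification one would supply if asked to make the citation precise; the paper evidently regards it as folklore and omits it.
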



As we will be reducing from the tiling problem, which is co-recursively enumerable, we point out here that if $\CC$ is a hereditary class of finite structures in a finite relational language, then the JEP for $\CC$ is also co-recursively enumerable. To see this, consider $A, B \in \CC$ that can be jointly embedded, as witnessed by $C \in \CC$ and embeddings $f \colon A \to C$ and $g \colon B \to C$. As $\CC$ is hereditary, the substructure of $C$ induced on $f(A) \cup g(B)$ is also in $\CC$. Thus, given $A, B \in \CC$, there is a finite bound on the size of the possible witnesses for joint embedding, and they can be exhaustively checked.

\section{Graphs with unary predicates} \label{sec:unarypred}

In this section, we work with a language with many of the features our argument needs built in. This section's argument is then augmented with coding tricks to give arguments in the standard graph language.

\subsection{The language}

We will work in the following language.
\begin{enumerate}
\item $E$: a symmetric, irreflexive binary edge relation
\item $O^i, P'^i, G^i, T^1$ for $i \in \set{0, 1}$: unary predicates, which will denote origin vertices, non-origin path vertices, grid vertices, and tile vertices 
\item $C_i$ for $1 \leq i \leq 4$: unary predicates, which will denote additional coding vertices
\end{enumerate}

We also define a unary predicate $P^i = O^i \cup P'^i$, which will denote path vertices.


\subsection{The Canonical Models} \label{sec:canonical}
We here further flesh out steps $(1)$ and $(2)$ from the proof sketch in the introduction.

We also assume that our language contains a directed edge relation and two colored edges. We will show how to code these in our language, using the available unary predicates, in \S \ref{sec:prelim}.

Although we are concerned with the JEP for finite structures in a hereditary class $\CC$, the compactness theorem implies that the JEP for the finite members of $\CC$ is equivalent to the JEP for countable members of $\CC$. Rather than work with families of increasingly large finite structures, we prefer to take our canonical models to be countable. 

\begin{figure}[h]
\begin{center}
\includegraphics[scale=.9]{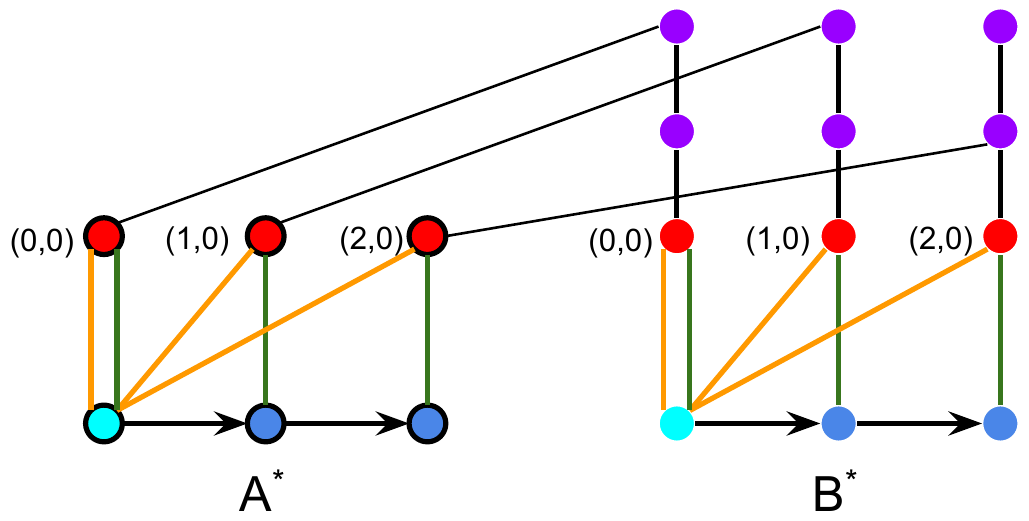}
\end{center}
\caption{A portion of the canonical models $A^*$ and $B^*$, with the grid points in $A^*$ tiled by tiles attached to grid points with the same coordinates in $B^*$. Path points are blue, with the origin a different shade. Grid points are red, their $y$-coordinate determined by an orange edge and their $x$-coordinate by a green edge. Tile points are purple. Points in 0-superscripted predicates have a black border, while points in 1-superscripted predicates do not. \\ This encodes a tiling of (0,0) with tile-type 2, (1,0) with tile-type 2, and (2,0) with tile-type 1.}
\label{fig:graph canonical}
\end{figure}

$A^*$ (see Figure \ref{fig:graph canonical}) will contain a 1-way infinite directed path, with vertices in $P^0$, and a marked origin in $O^0$. 
To every pair of points in this path, we attach a $G^0$-vertex, representing a grid point with coordinates taken from the attached path points. Because we must distinguish between $x$ and $y$-coordinates, we use the colored edges to attach each grid point to its coordinates. 

$B^*$ will look like a copy of $A^*$, using 1-superscripted predicates instead, but with a path of length $\tile$ (where $\tile$ is the number of tile types in the given tiling problem) $T^1$-vertices attached to each $G^1$-point. These represent a full tile-set available at each coordinate, with the different tile-types being distinguished by their distance from the corresponding $G^1$-point.

When we try to jointly embed $A^*$ and $B^*$, we wish our constraints to force the following: for every $G^0$-point in $A$, with coordinates $(x, y)$, we must add an edge to one tile-point attached to the $G^1$-point in $B$ with the same coordinates. This is interpreted as tiling the point $(x, y)$ by the corresponding tile-type, and our constraints should further enforce the local tiling rules.

For the particular classes of structures we are dealing with here, namely graphs with forbidden induced subgraphs, our choice of $B^*$ is rather baroque. We could have simply chosen $B^*$ to be a collection of $\tile$ tile points, with some further coding to distinguish the different tile-types. However, the construction presented here is more flexible and better adapted to handling more complex classes of structures.


\subsection{Preliminary definitions} \label{sec:prelim}
We will now precisely state our constraints, but first will establish some notation.

\begin{definition} \label{def:graphRelations}

We first define the ``special'' edges our construction uses.
\begin{enumerate}
\item $x \rightarrow^i y$ if $x, y \in P^i$ and there exist $a \in C_1, b \in C_2$ such that $xEaEbEy$. In this case, we say $x$ is the predecessor of $y$, and $y$ the successor of $x$.
\item $\Pi^i_1(v, w)$ if $v \in G^i$, $w \in P^i$, and there exists $a \in C_3$ such that $vEaEw$. In this case, we say $w$ is an $x$-projection of $v$.
\item $\Pi^i_2(v, w)$ if $v \in G^i$, $w \in P^i$, and there exists $a \in C_4$ such that $vEaEw$. In this case, we say $w$ is a $y$-projection of $v$.
\end{enumerate}

We say $g$ is a \emph{$G^i$-origin}, or sometimes a \emph{grid origin}, if there is an $x \in O^i$ such that $\Pi^i_1(g, x)$ and $\Pi^i_2(g, x)$.

Our constraints will force $x$ and $y$-projections to be unique. Given $g, g'$ in $G^i$, we say $g'$ is a horizontal successor of $g$ if they have the same $y$-projection, and the $x$-projection of $g'$ is the $\rightarrow$-successor of the $x$-projection of $g$. Similarly for vertical successor, but with $x$ and $y$ switched.

We now define binary relations related to the tiles.
\begin{enumerate}
\item For $i \in [\tile]$ (i.e. $i \in \set{1, \dots, \tile}$), we say $\tau_i(x, y)$ if $x \in G^1$, $y \in T^1$, and there exist $v_1, \dots, v_i \in T^1$ such that $v_i = y$ and $xEv_1E\dots Ev_i$. In this case, we say $y$ is a \emph{tile of type $i$ associated to $x$}.
\item $\tau(x, t)$ if $x \in G^0$, there is some $y \in G^1$ and $i \in [\tile]$ such that $\tau_i(y, t)$, and $xEt$. In this case, we say \emph{$x$ is tiled by $t$} or that \emph{$x$ is tiled by a tile of type $i$}.
\end{enumerate}

Finally, we say $x$ has a $\emph{full set of tiles}$ if there exist $t_i$ for $i \in [\tile]$ such that for all $i$, $\tau_i(x, t_i)$.
\end{definition}

\subsection{Constraints}

In addition to the constraints forcing a valid tiling to be produced when joint embedding the canonical models, we have several constraints which ensure that the origin, path, and grid points encode something grid-like. We would like to choose further constraints which ensure that every structure in our class looks like $A^*$ or $B^*$. We would like every grid point to have coordinates from the path, or every $G^1$-point to have a complete tile-set. However, as we cannot enforce such ``totality'' conditions using forbidden structures, we must allow for partial structures.

In the previous section, we noted that we would wish our constraints to force a $G^0$-point to be tiled using a tile from a $G^1$-point with the same coordinates. However, as we are forbidding a \emph{finite} number of finite structures, our constraints must have a \emph{local} character; as determining the coordinates of a grid point requires walking back to the origin, and thus looking at an unbounded number of vertices, we cannot use our constraints as desired. Instead, we will start the tiling at the origin, and then propagate it by local constraints.

Given a tiling problem $\TT$, we now define $\GG_\TT$ as the class of all finite graphs with the following constraints. Afterwards, we explicitly describe the forbidden subgraphs for some of the constraints.
\begin{enumerate}
\item \label{c:1} The unary predicates in the language are disjoint.
\item \label{c:2} A path vertex has at most 1 $\rightarrow$-predecessor.
\item \label{c:3} An origin vertex has no $\rightarrow$-predecessor.
\item \label{c:4} A grid vertex has at most 1 $x$-projection and 1 $y$-projection.
\item \label{c:5} Tile vertices are associated to at most one grid point, i.e. given $t \in T^1$, there do not exist distinct $g, h \in G^1$ such that $\tau_i(g, t)$ and $\tau_j(h, t)$.
\item \label{c:6} Tile vertices have a unique type, i.e. if $\tau_i(g, t)$ and $\tau_j(g, t)$ then $i=j$.
\item \label{c:7} The tiling rules of $\TT$ are respected.

 \item \label{c:8} If $g \in G^0$ and $h \in G^1$ are grid-origins, and $h$ has a full set of tiles, then $g$ must be tiled by a tile associated to $h$.
 
 \item \label{c:9} If a grid vertex is tiled, and there is an appropriate tile set for its neighbor, then its neighbor is also tiled. More precisely, we require the following.
 
 Suppose $g, g' \in G^0$ with $g'$ a horizontal (resp. vertical) successor of $g$, and $h, h' \in G^1$ with $h'$ a horizontal (resp. vertical) successor of $h$. Suppose $\tau(g, t)$ where $t$ is a tile vertex associated with $h$. If $h'$ has a full tileset, then $g'$ must be tiled by a tile vertex associated with $h'$.
\end{enumerate}

We note that only the last two constraints require the presence of edges, and so are the only ones that require forbidding \emph{induced} subgraphs.

We now give explicit forbidden subgraphs for some of the constraints. We sometimes forbid a non-induced subgraph; this is equivalent to forbidding the finite set of induced subgraphs obtained by adding edges in any fashion to the non-induced subgraph.
\begin{enumerate}
\item[(1)] For every pair of unary predicates, we forbid a point belonging to both predicates.
\item[(2)] For $i = 0,1$, we forbid the non-induced subgraphs consisting of points $p, p', q \in P^i$ (and the requisite coding vertices), such that $p \rightarrow^i q$ and $p' \rightarrow^i q$.
\item[(7)]Suppose $\TT$ forbids a tile of type $j$ to the right of (respectively, above) a tile of type $i$. Then we forbid the following as a non-induced subgraph.

 Let $g, g' \in G^0$ with $g'$ a horizontal (resp. vertical) successor of $g$. Let $h, h' \in G^1$ with $h'$ a horizontal (resp. vertical) successor of $h$. Finally, let $\tau(g, t_{h,i}), \tau(g', t_{h', j})$ where $t_{h, i}$ is a tile of type $i$ associated to $h$ and $t_{h', j}$ is a tile of type $j$ associated to $h'$.

\item[(8)]  Let $o_0 \in O^0, o_1 \in O^1$, $g \in G^0$, $h \in G^1$, and $t_1, \dots, t_\tile \in T^1$, with $\Pi^0_1(g,o_0)$, $\Pi^0_2(g,o_0)$, $\Pi^1_1(h,o_1)$, $\Pi^1_2(h,o_1)$, and  $hEt_1E\dots Et_\tile$. We forbid this as an induced subgraph, as well as any graph obtained by adding edges to this configuration, unless an edge is added between $g$ and some $t_i$.
\end{enumerate}

\subsection{An Informal Proof}
We wish to prove the following.

\begin{proposition} \label{prop:unaryundecidable}
Let $\TT$ be a tiling problem, and $\GG_\TT$ be the hereditary graph class defined above. Then $\GG_\TT$ has the JEP if and only if $\TT$ has a solution.
\end{proposition}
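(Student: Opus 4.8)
The plan is to prove the two implications separately; the forward direction (JEP $\Rightarrow$ solution) is the routine one, and the backward direction (solution $\Rightarrow$ JEP) carries the real content. For the forward direction, first note that $A^*, B^* \in \GG_\TT$: this is immediate from the constructions, since $A^*$ has no $G^1$- or tile-vertices, so constraints \ref{c:5}--\ref{c:9} are vacuous for it, and $B^*$ has no $G^0$-vertices, so constraints \ref{c:7}--\ref{c:9} are vacuous for it, while constraints \ref{c:1}--\ref{c:4} (resp.\ \ref{c:1}--\ref{c:6}) hold by design. Now suppose $\GG_\TT$ has the JEP. Since $\GG_\TT$ is cut out by finitely many finite forbidden subgraphs, the compactness observation of \S\ref{sec:canonical} lets us pass to countable members, so there is a countable graph $C$ avoiding all the forbidden subgraphs together with embeddings $f\colon A^* \to C$ and $g\colon B^* \to C$. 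Writing $g_{a,b}$, $h_{a,b}$ for the grid vertices of coordinates $(a,b)$ in $A^*$, $B^*$, embeddings preserve all of the auxiliary relations $\rightarrow^i$, $\Pi^i_j$, $\tau_i$, so $f(g_{0,0})$ is a $G^0$-origin in $C$, $g(h_{0,0})$ is a $G^1$-origin in $C$ carrying a full tileset, and more generally $f(g_{a+1,b})$ and $f(g_{a,b+1})$ are the horizontal and vertical successors of $f(g_{a,b})$, and similarly for the $h_{a,b}$, all of which carry full tilesets.

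Starting from constraint \ref{c:8} applied to $f(g_{0,0})$ and $g(h_{0,0})$, and iterating constraint \ref{c:9} along staircase paths, one shows by induction on $(a,b) \in \N^2$ that $f(g_{a,b})$ is tiled in $C$ by a tile associated to $g(h_{a,b})$; by constraint \ref{c:6} the type of such a tile is well-defined, so we may fix one and let $\tau(a,b)$ be its type, obtaining $\tau\colon \N^2 \to [\tile]$. If $\TT$ forbade a tile of type $\tau(a+1,b)$ to the right of one of type $\tau(a,b)$, then the forbidden subgraph of constraint \ref{c:7} would appear in $C$, witnessed by $f(g_{a,b})$, $f(g_{a+1,b})$ and the associated tiles; similarly for the vertical rules. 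Hence $\tau$ is a solution to $\TT$.

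For the backward direction, suppose $\tau$ solves $\TT$; I will give a joint embedding procedure for finite $X, Y \in \GG_\TT$. The crucial structural point is that the disjoint union $X \sqcup Y$ already satisfies every constraint except possibly \ref{c:8}: inspecting the forbidden subgraphs, each one other than that for \ref{c:8} is connected --- the tiling edges $gEt$ of the asserted relations $\tau(g,t)$, the projection edges, and the $\rightarrow^i$-chains link each configuration into one piece --- so it lies inside a single connected component of $X \sqcup Y$ and hence inside $X$ or inside $Y$, where it is already forbidden. The forbidden subgraph for \ref{c:8}, by contrast, is disconnected: a $G^0$-origin with its origin on one side, a $G^1$-origin with its origin and a full tileset on the other. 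Accordingly, we build $Z$ from $X \sqcup Y$ by adding edges, each between a $G^0$-vertex of one structure and a $T^1$-vertex of the other: whenever a $G^0$-origin $g$ occurs in one structure and a $G^1$-origin $h$ with a full tileset occurs in the other, add an edge from $g$ to the type-$\tau(0,0)$ tile associated to $h$, and then propagate, walking simultaneously along matching staircases through the grid vertices reachable from $g$ and from $h$ by horizontal and vertical successor steps, and, whenever a grid vertex of coordinates $(a,b)$ has been tiled and its successor $g'$ of coordinates $(a',b')$ admits a corresponding $G^1$-grid vertex $h'$ with a full tileset, adding an edge from $g'$ to the type-$\tau(a',b')$ tile associated to $h'$. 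Since $X, Y$ are finite this is a finite set of edges, and $X, Y$ embed into $Z$ via the inclusions.

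It remains to verify $Z \in \GG_\TT$. The added edges join a $G^0$-vertex to a $T^1$-vertex, and such an edge can never occur inside a $\rightarrow^i$-chain, as a $\Pi^i_j$-edge, or inside a tile-chain; hence $Z$ has exactly the same $\rightarrow^i$-, $\Pi^i_j$-, origin- and full-tileset-structure as $X \sqcup Y$, so constraints \ref{c:1}--\ref{c:6} hold, and so does \ref{c:8} (every origin pair that could witness a violation already did so in $X \sqcup Y$ and has been supplied with an edge). Constraint \ref{c:9} holds because the propagation was run precisely so as to enforce it, and constraint \ref{c:7} holds because whenever two newly added tilings $\tau(g,\cdot)$, $\tau(g',\cdot)$ occur with $g'$ a successor of $g$, their types are $\tau$-values at staircase-adjacent points of $\N^2$, where $\tau$, being a solution, respects the rules --- while a pair consisting of one new and one old tiling cannot occur, since the successor relation linking the two $G^1$-sides would then have to cross between $X$ and $Y$. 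Thus $\GG_\TT$ has the JEP. The main difficulty lies in this last direction, and specifically in this verification: one must describe precisely the partial grids that can occur in $X$ and $Y$ (the path-predecessor relation need only be a branching forest, and grid vertices need not have coordinates reaching an origin), make the propagation well-defined when several origin pairs overlap, and check carefully that the added edges do not interact with tilings already present in $X$ or $Y$ so as to violate \ref{c:7} or \ref{c:9}.
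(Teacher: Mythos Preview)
Your forward direction matches the paper's. For the backward direction your overall strategy is sound, but you take a genuinely different route from the paper, and this is worth noting.

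You build $Z$ by \emph{propagation}: start at origin pairs in opposite factors and walk outward along successor steps, tiling as you go and only when full tilesets are available. The paper instead defines the added edges \emph{statically}: it introduces the set $G^i_*$ of grid vertices that have well-defined coordinates (i.e.\ whose projections trace back to an origin), defines $\theta_* \colon G^0_* \to [\tile]$ by $\theta_*(g) = \theta(n,m)$ where $(n,m)$ are the coordinates of $g$, and then adds the edge $(g,t)$ whenever $g \in G^0_*$, $t$ lies in the other factor, and there is some $h \in G^1$ with the same coordinates as $g$ and $\tau_{\theta_*(g)}(h,t)$. No induction, no staircase walk, no requirement that $h$ carry a full tileset or that preceding grid points be present. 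The paper explicitly remarks that this overshoots what is strictly needed; the payoff is that the verification of constraints~\ref{c:7}--\ref{c:9} becomes a short direct argument about coordinates, with no propagation state to track.

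Your approach works, but the difficulties you flag in your final paragraph---making propagation well-defined when multiple origin pairs or multiple successors are present, and checking that new and old tilings cannot combine into a violation---are exactly the bookkeeping that the paper's coordinate-based definition eliminates. In particular, your phrase ``the type-$\tau(a',b')$ tile associated to $h'$'' hides that there may be several such tiles, and ``walking simultaneously along matching staircases'' understates the branching needed (a path vertex may have several $\rightarrow$-successors, so a grid vertex may have several horizontal or vertical successors). None of this is fatal, but it means your write-up stops just short of a complete proof, whereas the paper's static formulation lets each of Lemmas~\ref{lemma:Con7}--\ref{lemma:Con8} and the analogous check for constraint~\ref{c:9} be dispatched in a paragraph.
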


We first give an informal version of the proof, somewhat fleshing out the sketch from the introduction.

\begin{proof}
\emph{The easy direction: from the JEP to a tiling} \\
Suppose $\GG_\TT$ has the JEP. Note that $A^*, B^*$ as described above are in $\GG_\TT$, so we may jointly embed them. By constraint \ref{c:8}, the $g^0$-origin in $A^*$ must be tiled by adding an edge to a tile associated with the $g^1$-origin in $B^*$, and by constraint \ref{c:9} this must propagate to a tiling of the whole grid in $A^*$, for each $(i,j)$ adding an edge from the $g^0$-point in $A^*$ with coordinates $(i,j)$ to a tile associated with the $g^1$-point in $B^*$ with coordinates $(i,j)$. Since the tiling rules must be respected by constraint \ref{c:7}, we may then read a solution to the tiling problem off the resulting graph.

\emph{The delicate direction: from a tiling to the JEP} \\
Here, we are a bit sketchier. We first fix a solution $\theta \colon \N^2 \to [\tile]$ to the tiling problem $\TT$. Given $A, B \in \GG_\TT$, we initially take the disjoint union $C = A \sqcup B$.

As only constraints \ref{c:8} and \ref{c:9} require the presence of edges, these are the only constraints that may be violated at this point, and in fact only constraint \ref{c:8} may be. We thus use $\theta(0, 0)$ to tile all $G^0$-origins in one factor from all full tilesets attached to $G^1$-origins in the other factor. However, now there may be violations of constraint \ref{c:9} for points with coordinates $(1,0)$ and $(0,1)$. We continue using $\theta$ to appropriately tile our grids. The key point here is constraints \ref{c:2}-\ref{c:4} ensure that every grid point we must work with has well-defined coordinates, so we have a definite input to give to $\theta$.
\end{proof}

In the following two subsections, we give the formal proof of Proposition \ref{prop:unaryundecidable}.

\subsection{From the JEP to a Tiling} \label{sec:JEPtoTiling}
Suppose $\GG_\TT$ has the JEP. For this direction, we may largely repeat the informal version.

Let $\Pi^0 = \set{p^0_i | i \in \N}$, and let $\Gamma^0 = (\Pi^0)^2$, whose elements we denote $g^0_{i, j}$ rather than $(p^0_i, p^0_j)$. Let $A^*$ start with the vertex set $\Pi^0 \cup \Gamma^0$, with $p_0^0 \in O^0$, $\Pi^0\bs\set{p_0^0} \subset P'^0$, and $\Gamma^0 \subset G^0$. Also, add  coding vertices in $C_i$ and the associated edges needed to encode the relations $p_i^0 \rightarrow^0 p_{i+1}^0$ for each $p_i^0 \in \Pi^0$, and $\Pi^0_1(g_{i,j}^0, p_i^0)$ and $\Pi^0_2(g_{i,j}^0, p_j^0)$.

 Let $B^*$ be constructed as $A^*$, but using 1-superscripted points, sets, and predicates in place of 0-superscripted ones. Let $\Theta^1 = \Gamma^1 \times [\tile]$, and denote its elements as $t^1_{g, i}$ rather than $(g, i)$, Add these vertices to $B^*$, with $\Theta^1 \subset T^1$. Finally, for each $g \in \Gamma^1$, add edges so that $gEt^1_{g,1}E\dots Et^1_{g,\tile}$.

By inspection, $A^*, B^* \in \GG_\TT$. Let $C \in \GG_\TT$ jointly embed $A^*$ and $B^*$. We claim $C$ encodes a solution to $\TT$.

By constraint \ref{c:1}, no points in $A^*$ and $B^*$ got identified in $C$, except perhaps coding vertices. By constraints \ref{c:8} and \ref{c:9}, for every $(i,j) \in \N^2$ there is some $k \in [T]$ such that $\tau(g^0_{i,j}, t^1_{g_{i,j}, k})$. Define the function $\theta \colon \N^2 \to [\tile]$ by picking one such $k$ for each $(i,j)$. By constraint \ref{c:7}, $\theta$ is a solution to $\TT$.

\subsection{From a Tiling to the JEP}

For this section, we fix a solution $\theta \colon \N^2 \to [\tile]$ to $\TT$.

We begin by establishing some effects of constraints  \ref{c:2}-\ref{c:4}, which will allow us to assign coordinates to grid points. We note that, although it would add little additional overhead, it is not necessary to constrain the number of $\rightarrow$-successors, and so constraints \ref{c:2} and \ref{c:3} actually allow the path vertices to form a forest.

\begin{definition}
In any graph, let $\rightarrow^i_n$ be the $n$-fold composition of $\rightarrow^i$.

Given $p \in P^i$ and $o \in O^i$, we say \emph{$p$ is on a path with origin $o$} if there is some $n \in \N$ so that $o \rightarrow^i_n p$. In this case, we say \emph{$p$ is at distance $n$ from $o$}.

Let $G^i_*$ be the set of all $g \in G^i$ such that there exist $o \in O^i$ and $x, y \in P^i$ with $\Pi^i_1(g, x), \Pi^i_2(g, y)$ and $x$ and $y$ are on paths with origin $o$. In this case, if $x$ is at distance $n$ from $o$, and $y$ at distance $m$, we say \emph{$g$ has coordinates $(n,m)$}.
\end{definition} 

Constraints \ref{c:2} and \ref{c:3} ensure that if $p$ is on a path with origin $o$ and a path with origin $o'$, then $o= o'$. They also ensure that the distance of $p$ from $o$ is unique. This, together with constraint \ref{c:4}, ensures that the coordinates of a grid point are unique. 

\begin{definition}
Let $\theta_* \colon G^0_* \to [\tile]$ be defined by $\theta_*(g) = i$ if and only if $g$ has coordinates $(n, m)$ and $\theta(n,m) = i$.
\end{definition}

We are now ready to state our joint embedding procedure. Let $A, B \in \GG_\TT$. Let $C_0$ be the disjoint union $A \sqcup B$. We construct an extension $C$ of $C_0$ by adding edges of the form $(g, t)$ when the following conditions are met.
\begin{enumerate}
\item $(g, t) \in A \times B \cup B \times A$
\item $g \in G^0$
\item $g$ has coordinates $(n,m)$ for some $n,m \in \N$
\item There is $h \in G^1$ with coordinates $(n,m)$ such that $\tau_{\theta_*(g)}(h, t)$
\end{enumerate}

\begin{remark}
This procedure may add many more tiling-relations than would be required to satisfy the constraints. For example, we tile any grid point with coordinates, even if preceding grid points are missing that block propagation from the origin, and we may tile using tiles from incomplete tilesets.
\end{remark}

We now wish to show that $C \in \GG_\TT$ by showing it satisfies each constraint.

As constraint \ref{c:1} only involves unary predicates, and these remain unchanged by taking the disjoint union and adding edges, it remains satisfied in $C$. 

\begin{lemma}
$C$ satisfies constraints \ref{c:2}--\ref{c:6}.
\end{lemma}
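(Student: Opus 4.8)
The plan is to show that the operation ``$C_0 \mapsto C$'' leaves untouched every relation that the constraints \ref{c:2}--\ref{c:6} actually talk about, so that these constraints, which hold in $A$ and $B$ and hence in $C_0 = A \sqcup B$, are inherited by $C$.

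First I would isolate the one structural fact about the procedure that does all the work: every edge of $C$ that is not already an edge of $C_0$ has one endpoint in $G^0$ and the other in $T^1$. This is immediate from conditions (2) and (4) in the definition of the procedure, since $\tau_{\theta_*(g)}(h,t)$ forces $t \in T^1$. Now recall that $C$ already satisfies constraint \ref{c:1}, so the unary predicates $G^0, G^1, P^0, P^1, O^0, O^1, C_1,\dots,C_4, T^1$ are pairwise disjoint. Running through Definition \ref{def:graphRelations}, every edge occurring in a configuration witnessing $x \rightarrow^i y$, $\Pi^i_1(v,w)$, $\Pi^i_2(v,w)$, or $\tau_i(x,y)$ joins two vertices lying respectively in $\{P^i, C_1, C_2\}$, in $\{G^i, C_3, P^i\}$, in $\{G^i, C_4, P^i\}$, or in $\{G^1, T^1\}$; by disjointness, none of these pairs equals $\{G^0, T^1\}$. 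Hence no newly added edge can take part in such a witnessing configuration. Since $C$ is obtained from $C_0$ purely by adding edges, and each of these relations is defined existentially (by the presence of a configuration of edges), the relations $\rightarrow^i$, $\Pi^i_1$, $\Pi^i_2$, $\tau_i$ computed in $C$ coincide exactly with those computed in $C_0$.

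The rest is bookkeeping. Because $C_0 = A \sqcup B$ has no edges between the $A$-part and the $B$-part, none of the above witnessing configurations can straddle the two parts, so on $C_0$ each of $\rightarrow^i$, $\Pi^i_1$, $\Pi^i_2$, $\tau_i$ is just the disjoint union of the corresponding relation on $A$ and on $B$; by the previous paragraph the same is true on $C$, and the unary predicates of $C$ are likewise the disjoint unions of those of $A$ and $B$. Constraints \ref{c:2}--\ref{c:6} are each a ``uniqueness'' assertion about these unary predicates together with $\rightarrow^i$, $\Pi^i_1$, $\Pi^i_2$, $\tau_i$ (``at most one $\rightarrow$-predecessor'', ``at most one $x$-projection'', ``associated to at most one grid point'', ``at most one type''), and such an assertion passes from the two parts of a disjoint union to the union. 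As $A, B \in \GG_\TT$ satisfy \ref{c:2}--\ref{c:6}, so does $C_0$, and therefore so does $C$.

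I do not expect a real obstacle in this lemma; the only point needing care is the verification in the second paragraph that a $G^0$--$T^1$ edge can never appear inside a defining configuration of one of the special relations, and this is precisely where disjointness of the unary predicates (constraint \ref{c:1}) is used. The genuinely delicate work is deferred to constraints \ref{c:8} and \ref{c:9}, which involve edges and must be checked by hand against the choices made by $\theta_*$.
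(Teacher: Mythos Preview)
Your proof is correct and follows essentially the same route as the paper: both argue that the forbidden configurations for \ref{c:2}--\ref{c:6} are connected (hence hold in $C_0 = A \sqcup B$), that the only new edges are $G^0$--$T^1$ edges, and that by disjointness of the predicates (constraint~\ref{c:1}) no such edge can appear in any of these configurations. Your version is more explicit in verifying, relation by relation, that $\rightarrow^i$, $\Pi^i_1$, $\Pi^i_2$, $\tau_i$ are unchanged, whereas the paper compresses this into the single observation that none of the forbidden configurations involve both a $G^0$-vertex and a $T^1$-vertex; the content is the same.
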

\begin{proof}
For all these constraints, the forbidden configuration is connected, and thus they are satisfied in $C_0$. However, our procedure then only adds edges from $G^0$ to $T^1$-vertices, which by constraint \ref{c:1} are not of any other type. As none of the forbidden configurations involve both $G^0$ and $T^1$-vertices, such edges cannot cause them to be violated, and so they continue to be satisfied in $C$. 
\end{proof}

For the remaining constraints, the outline of the argument is the same. We consider a forbidden configuration in $C$, and show that it must have arisen from adding edges to a particular configuration in $C_0$. We then argue that our procedure would have added edges to the $C_0$-configuration so as to avoid creating the forbidden configuration in $C$.

\begin{lemma} \label{lemma:Con7}
$C$ satisfies constraint \ref{c:7}.
\end{lemma}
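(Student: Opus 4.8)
The plan is to follow the template laid out just before the statement: take a copy of the forbidden configuration for constraint~\ref{c:7} sitting inside $C$, trace it back to a configuration in $C_0 = A \sqcup B$, and then check that our tiling procedure could not have produced it. Recall the forbidden configuration for~\ref{c:7}: points $g, g' \in G^0$ with $g'$ a horizontal (resp.\ vertical) successor of $g$, points $h, h' \in G^1$ with $h'$ a horizontal (resp.\ vertical) successor of $h$, and tile vertices $t_{h,i}, t_{h',j}$ with $\tau(g, t_{h,i})$, $\tau(g', t_{h',j})$, where $\TT$ forbids a tile of type~$j$ to the right of (resp.\ above) one of type~$i$. First I would observe that all the ``special'' edges involved ($\rightarrow^1$ edges between path vertices, $\Pi^1_1,\Pi^1_2$ edges, and the $E$-edges along the tile-chains $h E t_{h,1} E \cdots$) are of a type our procedure never adds: the procedure only ever adds $E$-edges between a $G^0$-vertex and a $T^1$-vertex. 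Hence the entire ``skeleton'' witnessing that $g'$ is a successor of $g$, that $h'$ is a successor of $h$, and that $t_{h,i}, t_{h',j}$ are tiles of the stated types associated to $h, h'$ already exists in $C_0$, and in particular lies within a single factor; since $G^1$ and $T^1$ vertices are never identified across factors (constraint~\ref{c:1}), $h, h', t_{h,i}, t_{h',j}$ all lie in, say, $B$, and likewise $g, g'$ lie in a single factor. The only edges of the configuration that our procedure is even capable of adding are $(g, t_{h,i})$ realizing $\tau(g,t_{h,i})$ and $(g', t_{h',j})$ realizing $\tau(g', t_{h',j})$; so $g, g'$ lie in the opposite factor from $h, h', t_{h,i}, t_{h',j}$, and both of these tiling edges were in fact added by the procedure (if one of them were already in $C_0$, then the forbidden configuration would already be present in $A$ or $B$, contradicting $A, B \in \GG_\TT$).

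Next I would extract coordinates. Because the skeleton linking $h, h'$ back toward an origin and the skeleton linking $g, g'$ back toward an origin both live in $C_0$ unchanged, and using constraints~\ref{c:2}--\ref{c:4} as recorded in the discussion of $G^i_*$, the vertices $g, g'$ and $h, h'$ all have well-defined coordinates: say $g$ has coordinates $(n,m)$, so $h$ also has coordinates $(n,m)$ since the edge $(g, t_{h,i})$ was added by the procedure (condition~(4) of the procedure forces $h$ to share $g$'s coordinates). In the horizontal case, $g'$ then has coordinates $(n+1, m)$ and $h'$ has coordinates $(n+1, m)$ (vertical case: $(n, m+1)$ for both); this is exactly what ``horizontal/vertical successor'' means translated through the coordinate definition, and the uniqueness clauses of~\ref{c:2}--\ref{c:4} guarantee these are the only coordinates possible. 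Moreover, since the procedure added $(g, t_{h,i})$ and $\tau_i(h, t_{h,i})$ holds, condition~(4) forces $i = \theta_*(g) = \theta(n,m)$; likewise $j = \theta_*(g') = \theta(n+1, m)$ (resp.\ $\theta(n, m+1)$).

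Finally I would invoke that $\theta$ is a genuine solution to $\TT$ (Lemma hypothesis / the fixed solution): $\theta(n,m) = i$ and $\theta(n+1,m) = j$ with $\TT$ forbidding type~$j$ immediately right of type~$i$ directly contradicts that $\theta$ respects the tiling rules; the vertical case is identical with $(n, m+1)$. Hence no such forbidden configuration can exist in $C$, and $C$ satisfies constraint~\ref{c:7}. The only subtlety I anticipate — and the step I would be most careful about — is the bookkeeping that turns the graph-theoretic notion of ``horizontal/vertical successor'' (defined via $\rightarrow^1$-successor of projections) into the arithmetic statement about coordinates, and the verification that all the relevant vertices actually lie in $G^0_*$ / have coordinates in the first place; this rests entirely on constraints~\ref{c:2}--\ref{c:4} and on the fact that the procedure adds no edges that could create spurious new $\rightarrow$, $\Pi_1$, or $\Pi_2$ relations. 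Everything else is a direct unwinding of definitions.
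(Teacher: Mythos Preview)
Your proof is correct and follows essentially the same line as the paper's: reduce to $C_0$, argue both cross-factor tiling edges $(g,t_{h,i})$ and $(g',t_{h',j})$ were added by the procedure, read off coordinates, and contradict that $\theta$ is a solution. The one place the paper is more explicit is the step ``condition~(4) forces $i=\theta_*(g)$'': condition~(4) only gives $\tau_{\theta_*(g)}(h'',t_{h,i})$ for \emph{some} $h''$ with coordinates $(n,m)$, and you need constraints~\ref{c:5} and~\ref{c:6} (a tile vertex is associated to at most one grid point and has a unique type) to conclude $h''=h$ and hence $i=\theta_*(g)$.
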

\begin{proof}
Again, our constraint is connected, and so satisfied in $C_0$. Fix a violation of \ref{c:7}, say of the horizontal rule, with vertices as in the constraint description. As we only add edges from $G^0$-vertices to $T^1$-vertices, we must have added either the edge $(g, t_{h,i})$ or $(g', t_{h', j})$. However, if we have only added one such edge, the configuration without that edge would be connected and would have been present in $C_0$, and so be entirely contained in one factor. This is a contradiction, as we only add edges between points in distinct factors. Thus our procedure must have added both these edges.

Thus $t_{h, i}$ is a tile of type $\theta_*(g)$ and $t_{h', j}$ is a tile of type $\theta_*(g')$, and by constraints \ref{c:5} and \ref{c:6} these types are unique. Suppose $g$ has coordinates $(n, m)$; as $g'$ is a horizontal successor of $g$, it must have coordinates $(n+1, m)$. But then $t_{h, i}$ is of type $\theta(n, m)$ and $t_{h, j}$ is of type $\theta(n+1, m)$, so they cannot violate \ref{c:7}.
\end{proof}

\begin{lemma} \label{lemma:Con8}
$C$ satisfies constraint \ref{c:8}.
\end{lemma}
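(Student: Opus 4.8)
The plan is to follow the same template established in the proofs of Lemmas for constraints \ref{c:2}--\ref{c:7}: take a putative violation of constraint \ref{c:8} in $C$, trace it back to a configuration already present in $C_0 = A \sqcup B$, and then argue that our edge-adding procedure would in fact have tiled $g$ with a tile associated to $h$, contradicting the assumption that \ref{c:8} is violated. So suppose $g \in G^0$ and $h \in G^1$ are grid-origins, $h$ has a full set of tiles $t_1, \dots, t_\tile$, but $g$ is not tiled by any tile associated to $h$. The relevant forbidden configuration here (as spelled out in the explicit description of forbidden subgraphs for (8)) consists of $o_0 \in O^0$, $o_1 \in O^1$, $g$, $h$, and $t_1, \dots, t_\tile$ together with the coding vertices witnessing $\Pi^0_1(g,o_0)$, $\Pi^0_2(g,o_0)$, $\Pi^1_1(h,o_1)$, $\Pi^1_2(h,o_1)$, and the path $hEt_1E\dots Et_\tile$; the violation is that none of the edges $(g,t_i)$ is present.

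First I would observe that this configuration, \emph{as a configuration not involving any edge between $g$ and a $t_i$}, is connected: $o_0$ is joined to $g$ through $C_3$- and $C_4$-coding vertices, $o_1$ is joined to $h$ likewise, and $h$ is joined to all the $t_i$ along the path. Hence it is entirely contained in $C_0$, and therefore entirely within one factor, say without loss of generality $A$. (It cannot straddle both factors, since in $C_0$ there are no edges between the factors, and our procedure only ever adds edges between $G^0$-vertices in one factor and $T^1$-vertices in the other.) Now $g$ being a grid-origin means $\Pi^0_1(g,o_0)$ and $\Pi^0_2(g,o_0)$ for an origin vertex $o_0 \in O^0$; since $o_0 \rightarrow^0_0 o_0$, we have $o_0$ on a path with origin $o_0$ at distance $0$, so $g \in G^0_*$ with coordinates $(0,0)$. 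Likewise $h \in G^1_*$ with coordinates $(0,0)$.

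Next I would check that our procedure applies to the pair $(g, t_{\theta_*(g)})$ where, abbreviating, $\theta_*(g) = \theta(0,0) =: k$. Condition (2) holds since $g \in G^0$; condition (3) holds since $g$ has coordinates $(0,0)$; and condition (4) holds because $h \in G^1$ has coordinates $(0,0)$ and, since $h$ has a full set of tiles, there is a tile $t_k$ with $\tau_k(h, t_k)$, i.e.\ $\tau_{\theta_*(g)}(h, t_k)$. The only remaining condition is (1): that $(g, t_k)$ lies in $A \times B \cup B \times A$, i.e.\ that $g$ and $t_k$ are in opposite factors. But we have just argued the whole configuration lies in $A$, so $g$ and the tiles $t_1, \dots, t_\tile$ of $h$ are all in $A$ — which seems to place $g$ and $t_k$ in the \emph{same} factor, so the procedure would not fire, and we would not get a contradiction.

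The resolution — and the step I expect to be the main obstacle — is that, because $h$ (being a grid-origin with a full tileset entirely inside $A$) gives rise to a corresponding grid-origin in $B$ as well: indeed $A^*$-style and $B^*$-style structure is symmetric, but more to the point the argument must exploit that \emph{both} $A$ and $B$ are arbitrary members of $\GG_\TT$ and the procedure is run on $A \sqcup B$, so what matters is whether $g$ gets tiled by \emph{some} tile in the \emph{other} factor. So I would instead argue contrapositively at the level of the whole joint embedding: fix the factor, say $A$, containing the violating configuration; then look in $B$. If $B$ contains a $G^1$-origin $h'$ with a full tileset, then condition (4) is met for $g$ using a tile of $h'$, and condition (1) is met since $g \in A$, $h' \in B$; so the procedure adds $(g, t')$ for the appropriate tile $t'$ of $h'$ of type $\theta_*(g)$, and then $g$ \emph{is} tiled by a tile associated to a grid-origin, and since $\theta_*(g) = \theta(0,0)$ and the tiling rules are respected by $\theta$, constraint \ref{c:8} is not violated for the pair $(g, h')$. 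For the pair $(g, h)$ with $h \in A$: constraint \ref{c:8} only requires $g$ to be tiled by \emph{some} tile associated to $h$ — rereading the constraint, it says ``if $g \in G^0$ and $h \in G^1$ are grid-origins, and $h$ has a full set of tiles, then $g$ must be tiled by a tile associated to $h$'', so this does seem to demand a tile specifically from $h$. Hence the clean way is: a forbidden configuration for \ref{c:8} is connected in $C_0$ hence lies in one factor; but then it was already a forbidden configuration in that factor, contradicting $A, B \in \GG_\TT$ — unless our procedure added an edge $(g, t_i)$ to it; but our procedure never adds an edge within a factor, so it added no such edge; contradiction. Thus no forbidden configuration for \ref{c:8} exists in $C$, i.e.\ $C$ satisfies constraint \ref{c:8}. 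I would write the proof in exactly this form, checking carefully that the $C_0$-configuration obtained by deleting the edges our procedure added is connected and hence monochromatic (single-factor), which is the one point needing care.

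\begin{proof}
As with the preceding constraints, we consider a configuration in $C$ witnessing a violation of constraint \ref{c:8} and show it could not have arisen. Such a configuration consists of $o_0 \in O^0$, $o_1 \in O^1$, $g \in G^0$, $h \in G^1$, tiles $t_1, \dots, t_\tile \in T^1$, and the coding vertices witnessing $\Pi^0_1(g, o_0)$, $\Pi^0_2(g, o_0)$, $\Pi^1_1(h, o_1)$, $\Pi^1_2(h, o_1)$, and $h E t_1 E \dots E t_\tile$, with no edge between $g$ and any $t_i$.

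This configuration is connected: $o_0$ is linked to $g$ via a $C_3$-vertex and a $C_4$-vertex, $o_1$ is linked to $h$ likewise, and $h$ is linked to each $t_i$ along the path $h E t_1 E \dots E t_\tile$. Our procedure only adds edges between $G^0$-vertices of one factor and $T^1$-vertices of the other, so the only edges of the configuration that could have been added by the procedure are edges between $g$ and some $t_i$; but by assumption there are none. Hence the entire configuration is already present in $C_0 = A \sqcup B$. Since $C_0$ has no edges between its two factors and the configuration is connected, it lies entirely within $A$ or entirely within $B$. But then it is a configuration violating constraint \ref{c:8} inside a single factor, contradicting $A, B \in \GG_\TT$. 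Therefore no such configuration exists in $C$, and $C$ satisfies constraint \ref{c:8}.
\end{proof}
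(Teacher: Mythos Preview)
Your formal proof has a genuine gap: the forbidden configuration for constraint \ref{c:8} is \emph{not} connected in general. Your connectivity argument shows that $o_0$ is linked to $g$ through coding vertices and that $o_1$ is linked to $h$ and the $t_i$ through coding vertices and the tile path, but it exhibits no edge joining the $\{o_0, g, \text{coding}\}$-side to the $\{o_1, h, t_1, \dots, t_\tile, \text{coding}\}$-side. Indeed, in the base forbidden subgraph described after the list of constraints, there is no such edge: the configuration has two connected components. So the step ``the configuration is connected, hence lies in a single factor'' fails, and with it the contradiction with $A, B \in \GG_\TT$.

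The case you are missing is precisely the interesting one. When the $G^0$-side $X = \{g, o_0, \text{coding}\}$ lies in one factor and the $G^1$-side $Y = \{h, o_1, t_1, \dots, t_\tile, \text{coding}\}$ lies in the other, there is no contradiction with membership of $A$ or $B$ in $\GG_\TT$; instead one must check that the edge-adding procedure actually fires. Here $g$ has coordinates $(0,0)$ (being a $G^0$-origin), $h$ has coordinates $(0,0)$ and carries a full tileset, and $g, h$ lie in opposite factors, so conditions (1)--(4) of the procedure are met and the edge $(g, t_{\theta(0,0)})$ is added --- contradicting the assumption that no $g$--$t_i$ edge is present. This is exactly what the paper does: it argues that $X$ and $Y$ each lie in a single factor, observes (implicitly, via the hypothesis $A, B \in \GG_\TT$) that the factors must be distinct, and then invokes the procedure. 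Your long preamble circles around this point without landing on it; the clean formal proof you wrote at the end handles only the same-factor case and omits the cross-factor case entirely.
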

\begin{proof}
Let $X=\set{g, c, d, o}, Y=\set{g', c', d', o', t'_1, \dots, t'_\tile}$, and suppose $X \cup Y$ witnesses a violation of constraint \ref{c:8}, with $o \in O^0$, $g \in G^0$ with $x$ and $y$-projections equal to $o$, and $c$ and $d$ the requisite coding vertices; let $g', c', d', o'$ be a corresponding configuration using 1-superscripted predicates, and let $t_i \in T^1$ for $1\leq i \leq \tile$ with $g'Et'_1E\dots Et'_\tile$.

As $X$ and $Y$ are each connected and neither contains both a $g^0$-point and a $T^1$-point, they must each lie in a single factor, and these factors must be distinct. Thus in $C_0$, $g$ and $g'$ both have coordinates $(0, 0)$ and $g'$ has a full tileset, so our procedure adds an edge from $g$ to $t'_{\theta(0,0)}$, and so the constraint is satisfied in $C$. 
\end{proof}

\begin{lemma}
$C$ satisfies constraint \ref{c:9}.
\end{lemma}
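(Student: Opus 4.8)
The plan is to follow exactly the same template used for the earlier constraints, especially the proof of Lemma~\ref{lemma:Con7}: take a forbidden configuration for constraint~\ref{c:9} inside $C$, observe that it cannot have been present in $C_0$, deduce which tiling-edges our procedure must have added, and then check that the propagation condition on $\theta$ forces $g'$ to be tiled by a tile of $h'$.

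First I would unwind the definition of the forbidden configuration: vertices $g, g' \in G^0$ with $g'$ a horizontal (resp.\ vertical) successor of $g$, vertices $h, h' \in G^1$ with $h'$ a horizontal (resp.\ vertical) successor of $h$, all the requisite $C_1,\dots,C_4$-coding vertices realizing the $\rightarrow$, $\Pi_1$, $\Pi_2$ relations, a tile vertex $t$ with $\tau(g,t)$ and $t$ associated to $h$ (so some path $hEv_1E\cdots Ev_k=t$ of $T^1$-vertices), the full tileset $h'Et'_1E\cdots Et'_\tile$ attached to $h'$, and the \emph{absence} of any edge $(g', t'_\ell)$ witnessing that $g'$ is tiled by a tile of $h'$. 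This whole configuration, minus the edges our procedure can add (which are exactly the $G^0$-to-$T^1$ edges), is connected: the $G^0$-side ($g$, $g'$, their coding vertices, $h$, $h'$ via the $\Pi$-coding, and the $T^1$-chains off $h$ and $h'$) hangs together, so in $C_0$ it must sit inside a single factor. Hence $g$ and $g'$ lie in the same factor, and the only edge our procedure could have added here is $(g,t)$ — so $(g,t)$ was added, and since it was added between distinct factors, $t$ (together with $h$, $h'$ and all their attached vertices) lies in the \emph{other} factor from $g$ and $g'$.

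Next, because $(g,t)$ was added by the procedure, $g$ has coordinates $(n,m)$ for some $n,m$, there is $h^\circ \in G^1$ with coordinates $(n,m)$ and $\tau_{\theta_*(g)}(h^\circ,t)$; by constraint~\ref{c:5}, $t$ is associated to a unique $G^1$-vertex, so $h^\circ = h$ and $h$ has coordinates $(n,m)$. Since $g'$ is the horizontal successor of $g$, its coordinates are $(n+1,m)$ (vertical case: $(n,m+1)$); similarly $h'$, being the horizontal successor of $h$, has coordinates $(n+1,m)$. Now $\theta$ is a genuine solution to the tiling problem $\TT$, so the tiling rules are respected; in particular the pair $(\theta(n,m),\theta(n+1,m))$ is a legal horizontal pair — but more to the point, I only need that $\theta_*(g')=\theta(n+1,m)$ is \emph{defined}. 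Since $g' \in G^0_*$ has coordinates $(n+1,m)$, $h'$ has the same coordinates, and $h'$ has a full tileset, the tile $t'_{\theta_*(g')}$ satisfies $\tau_{\theta_*(g')}(h',t'_{\theta_*(g')})$, so conditions (1)--(4) of the procedure are met for the pair $(g', t'_{\theta_*(g')})$ (note $g'$ and $h'$ are in distinct factors, as established above). Therefore our procedure added the edge $(g', t'_{\theta_*(g')})$, meaning $g'$ \emph{is} tiled by a tile associated with $h'$, contradicting that $X\cup Y$ was a violation. Hence no such violation exists and $C$ satisfies constraint~\ref{c:9}.

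The main obstacle, and the only step needing genuine care, is the connectivity bookkeeping in the second paragraph: one must be sure that after deleting the addable $G^0$–$T^1$ edges the remaining configuration really is connected (so that it lived in one factor of $C_0$) and, in particular, that $t$ is tied to $h$ only through $T^1$-vertices and $h$ is tied to $g$ only via its coding vertices and projections — there is no edge directly linking the $A$-part to the $B$-part other than the ones the procedure is responsible for. Once that is pinned down, the coordinate computation and the appeal to $\theta$ being a solution are routine, exactly parallel to Lemma~\ref{lemma:Con7}. I would also remark, as the authors do elsewhere, that the procedure may have tiled $g'$ redundantly or from an incomplete tileset, but that is harmless since we only need \emph{some} tile of $h'$ to be attached to $g'$.

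\begin{proof}
As in the previous lemmas, after removing the edges our procedure is allowed to add --- which are exactly edges between $G^0$-vertices and $T^1$-vertices --- the configuration forbidden by constraint~\ref{c:9} is connected, hence present in $C_0$ and contained in a single factor. In particular $g$ and $g'$ lie in the same factor. The only edge in this configuration that our procedure could have added is $(g,t)$ (the other relevant edge, some $(g',t'_\ell)$, is precisely what is absent), so $(g,t)$ was added, and since our procedure only adds edges between distinct factors, $t$, $h$, $h'$ and all their attached coding and tile vertices lie in the other factor.

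Since $(g,t)$ was added, $g$ has coordinates $(n,m)$ and there is some $G^1$-vertex of coordinates $(n,m)$ to which $t$ is associated via $\tau_{\theta_*(g)}$; by constraint~\ref{c:5} this vertex is unique, hence equal to $h$, so $h$ has coordinates $(n,m)$. As $g'$ is a horizontal (resp.\ vertical) successor of $g$, it has coordinates $(n+1,m)$ (resp.\ $(n,m+1)$), and likewise $h'$, being a horizontal (resp.\ vertical) successor of $h$, has coordinates $(n+1,m)$ (resp.\ $(n,m+1)$). Thus $g' \in G^0_*$ and $h' \in G^1_*$ share these coordinates, and $\theta_*(g')$ is defined.

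Because $h'$ has a full tileset, there is a tile vertex $t'$ with $\tau_{\theta_*(g')}(h', t')$, and $t'$ lies in the same factor as $h'$, which is distinct from the factor of $g'$. Hence the pair $(g', t')$ meets conditions (1)--(4) of the joint embedding procedure, so the edge $(g', t')$ was added to $C$. Therefore $g'$ is tiled by a tile vertex associated with $h'$, contradicting the assumption that we had a violation of constraint~\ref{c:9}. So no such violation exists, and $C$ satisfies constraint~\ref{c:9}.
\end{proof}
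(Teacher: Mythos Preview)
Your overall strategy matches the paper's, but the connectivity bookkeeping in your opening paragraph is wrong and produces an internal contradiction. You assert that after removing the $G^0$--$T^1$ edges the forbidden configuration is connected and hence lies in a single factor of $C_0$, yet two sentences later you place $t,h,h'$ in the \emph{other} factor. In fact, once $(g,t)$ is removed the configuration is \emph{not} connected: the $G^0$-side ($g$, $g'$, their $P^0$-projections and coding vertices) and the $G^1$-side ($h$, $h'$, their $P^1$-projections, coding vertices, and attached $T^1$-chains) are disjoint. There is no $\Pi$-coding linking $G^0$-vertices to $G^1$-vertices as your informal discussion suggests --- $\Pi^0$ lands in $P^0$ and $\Pi^1$ lands in $P^1$, which are disjoint predicates. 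The paper's argument runs the other way: the \emph{full} configuration (including the edge $(g,t)$) is connected, so it cannot occur inside $C_0=A\sqcup B$ since each factor already satisfies the constraint; hence $(g,t)$ was added by the procedure, and deleting it splits the configuration into exactly two connected components, which then lie in distinct factors.

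Once that first step is repaired, the remainder of your proof is correct and in one respect more careful than the paper's: you explicitly invoke constraint~\ref{c:5} to identify the $G^1$-vertex supplied by condition~(4) of the procedure with the $h$ of the configuration, a step the paper leaves implicit when it writes ``$g$ and $h$ must have had coordinates in $C_0$''.
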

\begin{proof}
Consider a violation of constraint \ref{c:9}, with labels as in the constraint description (including suitable path and coding vertices). Since the constraint is connected, it must have been satisfied in $C_0$, and so the edge from $g$ to $t$ must have been added by our procedure afterward. 
As in Lemma \ref{lemma:Con8}, the violation then splits into two connected components in $C_0$, one in each factor; one component contains $g, g'$, and their associated path and coding vertices while the other contains $h, h'$, and their associated tilesets and path and coding vertices.  

As our procedure added an edge from $g$ to $t$, $g$ and $h$ must have had coordinates in $C_0$. Thus $g'$ and $h'$ also have coordinates in $C_0$. As $h'$ has a full tileset in $C_0$, our procedure adds an edge from $g'$ to a tile in this tileset, which satisfies the constraint. 
\end{proof}

\section{Moving to the language of graphs}\label{sec:movingToGraphs}
Given a finitely-constrained hereditary class $\GG_\TT$ in the language with unary predicates, we wish to produce a finitely-constrained hereditary graph class that has the JEP if and only if $\GG_\TT$ does. For this, we need some means of interpreting the unary predicates in the pure graph language. Our plan is to associate the $i^{th}$ unary predicate to some graph $G_i$, and to represent ``$v$ is in the $i^{th}$ predicate'' by freely joining a copy of $G_i$ over $v$. In order for this coding to be unambiguous, the graphs we choose must form an antichain under embeddings.

 We remark that we do not actually require an infinite antichain in the following definition, merely one with as many graphs as we have unary predicates. For our argument, the minimum size will be 13.

\begin{definition} \label{def:graphAntichain}
We now fix an infinite collection of 2-connected graphs with basepoints $(G_i, a_i)_{i \in \N}$, such that $\set{G_i}_{i \in \N}$ is an antichain under embeddability, and such that there is no automorphism of any $G_i$ moving the basepoint.
\end{definition}

\begin{definition}
Given a graph $G$, a {\em block} of $G$ is a maximal 2-connected component. Every graph has a unique decomposition into blocks.
\end{definition}

\begin{definition}
Let $\CC_k$ be the class of finite graphs with $k$ unary predicates, which we will refer to as colors $\set{1, \dots, k}$. Let $\CC^*_k \subset \CC_k$ be the subclass in which the colors partition the vertices, and in which any (colored) copy of the $G_i$ are forbidden.
\end{definition}

\begin{definition} \label{def:wedge}

Define $\wedge \colon \CC^*_k \to \set{graphs}$ as follows: for each vertex of the graph, if it has color $i$, freely attach a copy of $G_i$ over it at the basepoint, i.e. for each $v \in G$ with color $i$, take the disjoint union $G \sqcup (G_i, a_i)$, and then identify $v$ with $a_i$. These copies of $G_i$ will be called \emph{attached copies}.

The image of $A \in \CC^*_k$ will be denoted by $\widehat A$. We will also let $\widehat \GG = \set{\widehat G | G \in \GG}$.
\end{definition}

\begin{lemma} \label{lemma:attachedcopies}
Let $G \in \CC^*_k$. Any copy of $G_i$ in $\widehat G$ is an attached copy.
\end{lemma}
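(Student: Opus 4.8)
The plan is to analyze where a copy of $G_i$ can sit inside $\widehat G$ by tracking which edges of $\widehat G$ it can use. Recall that $\widehat G$ is built from $G \in \CC^*_k$ by freely attaching, at each vertex $v$ of color $i$, a copy of $(G_i, a_i)$ identified with $v$ at its basepoint. Since each $G_i$ is $2$-connected (Definition \ref{def:graphAntichain}), the blocks of $\widehat G$ are exactly: (a) the blocks of the underlying graph $G$, and (b) the attached copies of the various $G_j$, where each attached copy meets the rest of $\widehat G$ only in its single basepoint vertex, which is a cut vertex of $\widehat G$ (unless $G$ itself is trivial there). So $\widehat G$ has an explicit block decomposition, and the attached copies are precisely those blocks of the form $G_j$.

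First I would observe that any subgraph of $\widehat G$ isomorphic to the $2$-connected graph $G_i$ must be contained in a single block of $\widehat G$: a $2$-connected subgraph cannot straddle a cut vertex. Hence the purported copy of $G_i$ lies either inside an attached copy of some $G_j$, or inside a block of $G$ itself. In the first case, a copy of $G_i$ inside $G_j$ forces $G_i$ to embed into $G_j$; since $\set{G_j}_{j \in \N}$ is an antichain under embeddability, this forces $j = i$, and then (by finiteness/rigidity of the $G_i$) the embedding of $G_i$ into $G_i$ is onto, so the copy is exactly that attached copy. The second case I would rule out: a block of $G$ in $\widehat G$ is just a block of the original uncolored graph $G$, and since $G \in \CC^*_k$ forbids any colored copy of $G_i$, it in particular contains no copy of $G_i$ as a subgraph — but here I must be slightly careful, because forbidding a \emph{colored} $G_i$ is a priori weaker than forbidding an uncolored one. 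This is where I expect the main subtlety to lie.

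To handle that subtlety, I would argue that a copy of $G_i$ sitting inside a block $B$ of $G$ can always be promoted to a colored copy, or else must reach outside $B$. Concretely: every vertex of $\widehat G$ lying in $G$ carries a color, so any copy of $G_i$ whose vertices all lie in $G$ is automatically a colored copy — contradicting $G \in \CC^*_k$. The remaining possibility is that the copy of $G_i$ uses some vertices of $G$ together with some vertices from attached copies; but then it is not contained in a single block, contradicting the first step unless it is the attached copy itself. Assembling these cases: the copy of $G_i$ is either an attached copy of $G_i$ (the desired conclusion) or yields a colored copy of $G_i$ inside $G$ (impossible) or straddles a cut vertex of $\widehat G$ (impossible for a $2$-connected graph).

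The one genuinely delicate point is pinning down the block structure of $\widehat G$ — in particular verifying that distinct attached copies share no edge and meet only at basepoints, and that a basepoint of an attached copy is indeed a cut vertex separating that copy from everything else (using that $G_i$ is $2$-connected with at least two vertices, and that attachment is \emph{free}, i.e.\ no edges are added between the attached copy and $G$ beyond the identification). Once that is in hand, the rigidity and antichain hypotheses on the $G_i$ do the rest. I would also note that the rigidity hypothesis (no automorphism of $G_i$ moves the basepoint) is not strictly needed for \emph{this} lemma — it will matter later for reconstructing the coloring — so I would keep the proof of this lemma to just the antichain property plus $2$-connectivity.
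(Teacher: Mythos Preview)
Your proposal is correct and follows essentially the same route as the paper: use $2$-connectedness of $G_i$ to confine any copy to a single block of $\widehat G$, identify the blocks of $\widehat G$ as the blocks of $G$ together with the attached $G_j$'s, rule out the blocks of $G$ because $G \in \CC^*_k$ forbids (colored) copies of $G_i$, and then use the antichain property to force $j=i$. The paper's version is considerably terser---it does not pause over the ``colored vs.\ uncolored'' point you flagged (it is indeed harmless since in $\CC^*_k$ every vertex carries a color), and it leaves the ``embedding into $G_i$ must be onto'' step implicit---but the argument is the same.
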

\begin{proof}
As $G_i$ is 2-connected, any copy must be contained in a single block of $\widehat G$. As the copies of $G_i$ are freely attached, the blocks of $\widehat G$ are those of $G$ as well as the attached $G_j$ for various $j$. Thus, any copy of $G_i$ must be contained in one of the attached $G_j$. As $\set{G_i}$ is an antichain, it must be one of the attached copies of $G_i$.
\end{proof}

\begin{definition}
Let  $\vee \colon \set{graphs} \to \CC_k$ be given by taking a graph, and for each copy of $G_i$ free over its basepoint, retaining the basepoint and giving it color $i$, and forgetting the remaining vertices.
\end{definition}

\begin{lemma} \label{lemma:hatinverse}
For any $G \in \CC_k$, $\vee(\widehat G) \cong G$. In particular, $\wedge$ is injective. 
\end{lemma}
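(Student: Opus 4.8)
The plan is to show that the operations $\wedge$ and $\vee$ are mutually inverse on the relevant domain, with $\vee$ recovering exactly the colored graph we started with. Fix $G \in \CC_k$ — note that here $G$ need not lie in $\CC^*_k$ — and consider $\widehat G = \wedge(G)$. By construction, $\widehat G$ consists of the underlying (uncolored) vertex set of $G$ together with, for each vertex $v$ of color $i$, an attached copy of $(G_i, a_i)$ glued at $v = a_i$. I would first observe that, by Lemma~\ref{lemma:attachedcopies}, every copy of any $G_i$ inside $\widehat G$ is one of these attached copies; this is precisely the structural fact that makes $\vee$ well-defined and unambiguous on $\widehat G$. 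In particular, the copies of the $G_i$ that are "free over their basepoint" in $\widehat G$ are exactly the attached copies, and distinct attached copies have distinct basepoints (the original vertices of $G$).

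Next I would run the definition of $\vee$ on $\widehat G$: for each attached copy of $G_i$, we retain its basepoint and color it $i$, and discard the remaining (interior) vertices of that copy. Since the basepoints of the attached copies are exactly the original vertices of $G$, and the vertex of color $i$ got an attached $G_i$ precisely because it had color $i$ in $G$, the vertex set and coloring of $\vee(\widehat G)$ coincide with those of $G$. For the edge relation, the edges of $\widehat G$ are the original edges of $G$ plus the internal edges of the attached copies; restricting to the retained basepoints and forgetting the rest leaves exactly the edges of $G$ (the attached copies are 2-connected graphs glued only at a single basepoint, so they contribute no edge between two distinct original vertices). Hence $\vee(\widehat G) \cong G$ as colored graphs.

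The "in particular" claim is then immediate: if $\widehat A \cong \widehat B$ for $A, B \in \CC^*_k$ (or more generally in $\CC_k$), applying $\vee$ to both sides gives $A \cong \vee(\widehat A) \cong \vee(\widehat B) \cong B$, so $\wedge$ is injective.

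The only subtle point — the "main obstacle," such as it is — is making sure that $\vee$ applied to $\widehat G$ does not accidentally strip or recolor an original vertex of $G$, i.e. that no original vertex of $G$ is itself the basepoint of some *non-attached* free copy of a $G_j$, and that no attached copy is mistakenly discarded. This is exactly what Lemma~\ref{lemma:attachedcopies} rules out, together with the hypotheses in Definition~\ref{def:graphAntichain} that each $G_i$ is 2-connected with no basepoint-moving automorphism and that $\{G_i\}$ is an antichain. So once Lemma~\ref{lemma:attachedcopies} is in hand, the argument is a routine unwinding of the definitions of $\wedge$ and $\vee$.
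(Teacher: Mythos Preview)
Your argument is correct and is exactly the paper's approach: the paper's entire proof is the single line ``This is immediate from Lemma~\ref{lemma:attachedcopies},'' and you have simply unwound what that immediacy consists of.

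One small correction: your parenthetical ``note that here $G$ need not lie in $\CC^*_k$'' is misleading. The map $\wedge$ is only defined on $\CC^*_k$, and Lemma~\ref{lemma:attachedcopies} (which you invoke) explicitly assumes $G \in \CC^*_k$; indeed the statement can fail for $G \in \CC_k \setminus \CC^*_k$ (an uncolored vertex would be discarded by $\vee$, and a copy of some $G_i$ already present in $G$ would produce spurious colored vertices). The ``$\CC_k$'' in the lemma statement is best read as a typo for $\CC^*_k$, so you should not call attention to the distinction as if your argument covered the broader case.
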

\begin{proof}
This is immediate from Lemma \ref{lemma:attachedcopies}.
\end{proof}

\begin{lemma} \label{lemma:imageconditions}
A graph is in the image of $\wedge$ if and only if it satisfies the following properties.
\begin{enumerate}
\item For each $i$, every copy of $G_i$ is free over its basepoint.
\item If $v$ is the basepoint of a copy $H_1$ of $G_i$ and $H_2$ of $G_j$, then $H_1 = H_2$.
\item Every vertex is, for some $i$, part of a copy of $G_i$.
\end{enumerate}
\end{lemma}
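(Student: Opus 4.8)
The plan is to prove both directions of the ``if and only if'' by using the characterization of attached copies already established in Lemma \ref{lemma:attachedcopies} and its corollaries. For the forward direction, suppose $H = \widehat{G}$ for some $G \in \CC^*_k$. Then by construction every vertex of $H$ either comes from $G$ (and carries a color $i$, hence an attached copy of $G_i$ over it) or lies in one of the attached copies, so property (3) is immediate. Property (1) follows because the attached copies are, by definition of $\wedge$, freely joined, and by Lemma \ref{lemma:attachedcopies} these are the \emph{only} copies of any $G_i$ in $\widehat{G}$; any non-free copy would contradict that lemma. Property (2) follows from the same lemma together with the fact that the colors partition the vertices of $G$: a vertex $v$ of $G$ has a single color $i$, so the unique attached copy over it is a copy of $G_i$, and there is no other copy of any $G_j$ with basepoint $v$.

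For the reverse direction, suppose a graph $H$ satisfies (1)--(3). I would recover a preimage $G \in \CC^*_k$ essentially as $\vee(H)$, and then verify that $\widehat{\vee(H)} \cong H$. First, properties (1) and (2) guarantee that $\vee$ is well-defined on $H$ in an unambiguous way: every copy of some $G_i$ is free over its basepoint (so ``the basepoint'' of the copy makes sense as a distinguished vertex, using the rigidity clause in Definition \ref{def:graphAntichain} that no automorphism of $G_i$ moves $a_i$), and by (2) a given vertex cannot be the basepoint of copies of two different $G_i, G_j$, so the color assigned by $\vee$ is well-defined. Property (3) ensures that $\vee$ forgets no vertex that isn't accounted for: every vertex of $H$ either is such a basepoint or gets forgotten because it lies in a copy, and no vertex is simultaneously both — this last point needs the 2-connectedness and antichain hypotheses to argue that the copies of the $G_i$ are pairwise ``block-disjoint'' except at basepoints, so the set of basepoints and the set of forgotten vertices are disjoint and cover $H$. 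Set $G := \vee(H) \in \CC_k$; I must check $G \in \CC^*_k$, i.e. that the colors partition the vertices (clear from the above) and that $G$ contains no colored copy of any $G_i$ — this holds because a colored copy of $G_i$ inside $G$ would lift to a copy of $G_i$ inside $H$ free over a basepoint that is itself a basepoint of another attached copy, violating the block structure forced by (1) and 2-connectedness.

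Finally, I would show $\widehat{G} \cong H$. By Lemma \ref{lemma:hatinverse} we know $\vee(\widehat{G}) \cong G = \vee(H)$, but this alone does not give $\widehat{G} \cong H$ since $\vee$ is not injective in general; instead I would argue directly. Build the isomorphism block-by-block using the block decomposition (Definition of block): the blocks of $\widehat{G}$ are the blocks of $G$ together with one attached copy of $G_i$ over each color-$i$ vertex; the blocks of $H$, by properties (1)--(3) and 2-connectedness, are exactly the copies of the various $G_i$ (each a single block, being 2-connected) together with the remaining blocks, which correspond precisely to the blocks of $G = \vee(H)$. Matching these up — using the rigidity of each $(G_i, a_i)$ to see that an attached copy is glued in an essentially unique way — yields the desired isomorphism.

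\textbf{Main obstacle.} The delicate point is the reverse direction's bookkeeping: showing that properties (1)--(3) force the copies of the $G_i$ to interact with the rest of $H$ only at their basepoints, so that ``basepoint vertices'' and ``forgotten vertices'' form a clean partition and the block decomposition of $H$ splits exactly into (attached-copy blocks) plus (blocks of $\vee(H)$). This is where 2-connectedness of the $G_i$, the antichain property, and the automorphism-rigidity at the basepoint all get used together, and one has to be careful that property (1) (``free over its basepoint'') is strong enough to prevent a $G_i$ from sharing an edge or a second vertex with a neighboring block. Once that structural claim is nailed down, both directions are largely formal consequences of Lemmas \ref{lemma:attachedcopies} and \ref{lemma:hatinverse}.
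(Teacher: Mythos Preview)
Your proposal is correct and follows essentially the same approach as the paper: Lemma \ref{lemma:attachedcopies} handles the forward direction, and for the reverse you set $G = \vee(H)$ and argue that $\widehat{G} \cong H$. The paper's proof is far terser than yours---it does not explicitly verify $\vee(H) \in \CC^*_k$ or build the isomorphism block-by-block, but simply reads off from (1) and (2) that $\vee(H)$ consists of the basepoints with induced edges and well-defined colors, and then invokes (3) to conclude $H = \widehat{\vee(H)}$ directly.
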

\begin{remark}
(2) implicitly uses that $\set{G_i}$ is an antichain.
\end{remark}
\begin{proof}
Suppose we start with $G \in \CC^*_k$. Then $\widehat G$ is produced by making each vertex the basepoint of a copy of $G_i$, for the appropriate $i$. Thus (3) is satisfied. Conditions (1) and (2) are satisfied by Lemma \ref{lemma:attachedcopies}.

Now suppose we are given a graph $G$ of this form. By conditions (1) and (2), the vertex set of $\vee(G)$ consists of the basepoints of copies of $G_i$, each given color $i$, and with edges between them induced by $G$. Then, using condition (3), we have $G = \widehat{ \vee(G)}$.
\end{proof}

\begin{lemma}
$\wedge$ preserves embeddings, i.e. there exists an embedding $A \into B$ if and only if there exists an embedding $\widehat A \into \widehat B$
\end{lemma}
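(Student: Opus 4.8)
The plan is to prove both directions of the equivalence by explicit construction, leveraging Lemma~\ref{lemma:attachedcopies} and Lemma~\ref{lemma:hatinverse} to control what the copies of the $G_i$ look like inside the $\wedge$-images.

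First, for the forward direction, suppose $f \colon A \into B$ is a (color-preserving) embedding in $\CC^*_k$. I would construct $\widehat f \colon \widehat A \into \widehat B$ as follows: on the original vertices of $\widehat A$ (those which are basepoints of attached copies in $\widehat A$), set $\widehat f = f$; this makes sense because $f$ preserves colors, so a vertex $v$ of color $i$ in $A$ maps to a vertex $f(v)$ of color $i$ in $B$, and hence the attached copy $G_i$ over $v$ in $\widehat A$ can be sent isomorphically to the attached copy $G_i$ over $f(v)$ in $\widehat B$ (using the identity on $G_i$, fixing the basepoint). On the non-basepoint vertices of each attached copy, extend $\widehat f$ via this isomorphism. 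One must check this is well-defined (distinct attached copies in $\widehat A$ share no non-basepoint vertices, since they are freely attached, and their basepoints are distinct since $f$ is injective), that it is injective (an attached copy over $v$ and one over $v'$ with $v \neq v'$ land in distinct attached copies over $f(v) \neq f(v')$, which meet only possibly at a shared basepoint — but a non-basepoint vertex cannot be a basepoint of another attached copy by Lemma~\ref{lemma:imageconditions}(2) and the antichain property), and that it preserves edges and non-edges: edges among original vertices are controlled by $f$ being an embedding, edges within an attached copy are controlled by the isomorphism $G_i \to G_i$, and there are no edges between the non-basepoint part of one attached copy and anything outside it since the copies are free.

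For the reverse direction, suppose $g \colon \widehat A \into \widehat B$ is an embedding of graphs. The key point is that $g$ must map attached copies to attached copies: by Lemma~\ref{lemma:attachedcopies}, every copy of $G_i$ in $\widehat A$ is an attached copy, and $g$ maps each attached copy $G_i$ of $\widehat A$ to a subgraph of $\widehat B$ isomorphic to $G_i$, which by Lemma~\ref{lemma:attachedcopies} applied to $\widehat B$ is again an attached copy. Since $\set{G_i}$ is an antichain, a copy of $G_i$ maps to an attached copy of $G_i$ (same index), and since no automorphism of $G_i$ moves the basepoint, $g$ sends the basepoint of each attached copy to the basepoint of its image. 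Hence $g$ restricts to a map on original vertices; moreover this restricted map sends a vertex of color $i$ (basepoint of an attached $G_i$) to a vertex of color $i$, so it is color-preserving. It is injective because $g$ is, and it preserves edges and non-edges among original vertices because these edges in $\widehat A$, $\widehat B$ are exactly the edges of $A$, $B$ (the attached copies being freely joined add no edges among original vertices). Thus the restriction is the desired embedding $A \into B$ in $\CC^*_k$.

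I expect the main obstacle to be the bookkeeping in the reverse direction: carefully arguing that $g$ cannot "mix" an attached copy with the surrounding graph — e.g.\ that the image of an attached $G_i$ is genuinely an attached copy and not a copy of $G_i$ straddling a block boundary of $\widehat B$. This is exactly where 2-connectedness of the $G_i$ (so any copy lies in one block) and the block decomposition of $\widehat B$ (Lemma~\ref{lemma:attachedcopies}) do the work, together with the antichain and rigid-basepoint hypotheses of Definition~\ref{def:graphAntichain}. Once that is pinned down, checking that the induced map on original vertices is a color-preserving embedding is routine.
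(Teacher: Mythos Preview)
Your proof is correct and follows essentially the same approach as the paper. The paper dismisses the forward direction as ``clear'' and, for the reverse direction, argues (as you do) that basepoints must map to basepoints via Lemma~\ref{lemma:attachedcopies} and the rigid-basepoint hypothesis, then packages the restriction to original vertices using the map $\vee$ and Lemma~\ref{lemma:hatinverse} rather than checking the embedding properties by hand; your more explicit bookkeeping fills in exactly the details the paper leaves implicit.
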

\begin{proof}
The forward direction is clear. 

For the other direction, suppose $\widehat A \into \widehat B$. Then for each copy of $G_i \subset \widehat A$, the basepoint must be mapped to such a basepoint in $\widehat B$. By Lemma \ref{lemma:imageconditions}, each of these basepoints in $\widehat B$ has a free copy of $G_i$ over it, and so can be identified with a vertex in $\vee(\widehat B)$. Furthermore, it will receive the same color as the corresponding point in $\vee(\widehat A)$. Finally, $\vee$ preserves the induced graph on the points it retains, so $\vee(\widehat A) \into \vee(\widehat B)$, and so by Lemma \ref{lemma:hatinverse} we are finished.
\end{proof}

As $\wedge$ preserves embeddings, the class $\GG_\TT$ in the language with unary predicates will have the JEP if and only if its image under $\wedge$ does. However, this image is not a hereditary graph class, and it is not clear that its downward closure will be finitely-constrained. So our goal now is to find some finitely-constrained hereditary graph class such that every member can be completed to an element in the image of $\GG_\TT$ under $\wedge$, which must satisfy the conditions of Lemma \ref{lemma:imageconditions}.

The following constraints are meant to enforce conditions (1) and (2) of Lemma \ref{lemma:imageconditions}.

\begin{definition} \label{def:HHConstraints}
Let $\HH_1$ be the set of graphs consisting of, for each $i$, a copy of $G_i$ and an additional vertex adjacent to a point that is not the basepoint of $G_i$.

Let $\HH_2$ be the set of graphs consisting of a copy of $G_i$ and $G_j$ freely joined over their basepoints, for each $i, j$, allowing $i=j$.
\end{definition}

\begin{definition}
Given a set $\GG$ of graphs, we define $\neg \GG$ to be the corresponding hereditary graph class forbidding the graphs in $\GG$.
\end{definition}

Keeping in mind condition (3) of Lemma \ref{lemma:imageconditions}, the plan for our completion algorithm is to freely attach a copy of $G_i$ for some $i$ over every vertex that is not already in some copy of one of the $\set{G_i}$. However, randomly assigning colors may produce a forbidden structure. Thus, we make sure we have a ``dummy'' color available, which is not in any non-trivial constraint, and only use its associated $G_i$ for our completion.

\begin{lemma}
Let $\GG \subset \CC_k$, such that $\neg \GG \subset \CC^*_k$. Further suppose that the only graphs in $\GG$ containing a $k$-colored vertex are multicolored single vertices and colored copies of the $\set{G_i}$.

 Then every graph in $\neg(\widehat \GG \cup \HH_1 \cup \HH_2)$ embeds into one in $\widehat {\neg \GG}$.
\end{lemma}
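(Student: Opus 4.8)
The plan is to describe an explicit completion algorithm and then verify it works. Given $H \in \neg(\widehat{\GG} \cup \HH_1 \cup \HH_2)$, I want to produce $H' \in \widehat{\neg\GG}$ with $H \subseteq H'$. The algorithm is the one hinted at just before the statement: iterate over the vertices of $H$; for each vertex $v$ that is not already part of a copy of some $G_i$, freely attach a copy of $G_k$ (the dummy color) over $v$ at the basepoint. Call the result $H'$. I would first argue that this process is well-defined and terminates — each attachment only adds new vertices, and the new vertices of an attached $G_k$ are each already in a copy of $G_k$, so they are never themselves selected for further attachment; thus it suffices to process the original vertices of $H$ once.

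The main work is showing $H' = \widehat{G}$ for some $G \in \neg\GG$. First I would verify $H'$ satisfies the three conditions of Lemma~\ref{lemma:imageconditions}, so that $H'$ is in the image of $\wedge$, say $H' = \widehat{G}$ with $G = \vee(H')$. Condition (3) is immediate from the algorithm: every original vertex either was already in some copy of a $G_i$ or had a $G_k$ attached, and every added vertex lies in an attached $G_k$. For conditions (1) and (2), I would use that $H \in \neg(\HH_1 \cup \HH_2)$. Forbidding $\HH_1$ says no vertex outside a copy of $G_i$ is adjacent to a non-basepoint vertex of that copy — this is exactly the statement that every copy of $G_i$ already present in $H$ is free over its basepoint; and the freshly attached copies of $G_k$ are free by construction (disjoint union then identify basepoints). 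One must check that attaching new copies of $G_k$ cannot destroy freeness of an existing copy or of another new copy: since attachments are made at basepoints and add no edges among old vertices, and the $\set{G_i}$ form a $2$-connected antichain so any copy lives in a single block, a short block-decomposition argument (as in Lemma~\ref{lemma:attachedcopies}) handles this. Condition (2), that two copies sharing a basepoint coincide, similarly follows from forbidding $\HH_2$ together with the antichain property, again checking the interaction with newly attached copies via blocks.

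Finally I would check $G = \vee(H') \in \neg\GG$, i.e. $G$ avoids every colored graph in $\GG$. Suppose some $F \in \GG$ embeds into $G$. By hypothesis, the only members of $\GG$ with a $k$-colored vertex are multicolored single vertices (excluded since $\neg\GG \subseteq \CC^*_k$ means colors partition, but actually we must rule these out directly: a multicolored single vertex never appears as an induced colored subgraph since each vertex of $G$ gets exactly one color) and colored copies of the $\set{G_i}$ — but a colored copy of $G_i$ as a member of $\GG$ being present would contradict $H \in \neg\widehat{\GG}$ after translating back via $\wedge$. So we may assume $F$ has no $k$-colored vertex. Then the image of $F$ in $G$ avoids all the vertices that received the dummy color $k$, so it lies entirely in the sub-(colored-)graph of $G$ on vertices that were already in a copy of some $G_i$ in $H$ — equivalently, this colored subgraph is $\vee$ of a subgraph of $H$ already in the image of $\wedge$, so $F \hookrightarrow G$ would give $\widehat{F} \hookrightarrow H$, contradicting $H \in \neg\widehat{\GG}$. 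I expect the main obstacle to be the bookkeeping in this last step: carefully matching up a colored copy of $F$ inside $\vee(H')$ with an honest copy of $\widehat{F}$ inside $H$, making sure the dummy-colored vertices really are irrelevant and that the coloring of the non-dummy part of $H'$ agrees with the original coloring structure of $H$ (which requires knowing that the copies of $G_i$ already in $H$ are exactly as $\wedge$ would have produced them — precisely what forbidding $\HH_1, \HH_2$ buys us).
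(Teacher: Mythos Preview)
Your approach is essentially identical to the paper's: the same completion algorithm (attach $G_k$ at every vertex not already in some $G_i$), verification that the result lies in the image of $\wedge$ via the three conditions of Lemma~\ref{lemma:imageconditions}, and then the check that $\vee(H')\in\neg\GG$ by splitting on whether a putative forbidden $F$ contains a $k$-colored vertex.

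The one spot where your write-up diverges from the paper and needs tightening is the subcase where $F\in\GG$ is a colored copy of some $G_i$ (with a $k$-colored vertex). You assert this ``would contradict $H\in\neg\widehat{\GG}$ after translating back via $\wedge$,'' but the translation only gives $\widehat F\hookrightarrow H'$, not $\widehat F\hookrightarrow H$; the $k$-colored vertices might a priori correspond to freshly attached copies of $G_k$ absent from $H$. The paper closes this differently: it observes that since $H'$ satisfies condition~(1) (equivalently $H'\in\neg\HH_1$), no colored copy of $G_i$ can sit inside $\vee(H')$ at all, because the image vertices would form a copy of $G_i$ among basepoints in $H'$, and any non-basepoint of that copy would then have an external neighbor in its own attached $G_j$. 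You have already verified condition~(1), so you can simply invoke it here. (Alternatively, your line can be salvaged by noting that the image vertices form a copy of $G_i$ inside $H$, hence none of them was of ``type~(b)'', so all attached $G_j$'s at those vertices were already present in $H$ and $\widehat F\hookrightarrow H$ after all --- but this is exactly the bookkeeping you flagged as the likely obstacle.)
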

\begin{proof}
Let $G \in \neg (\widehat \GG \cup \HH_1 \cup \HH_2)$. Since $G \in \neg \HH_1$, it satisfies (1) from Lemma \ref{lemma:imageconditions}. Since $\GG$ contains all multicolored single vertices, then since $G \in \neg (\widehat \GG \cup \HH_2)$, it also satisfies (2) from Lemma \ref{lemma:imageconditions}.

For every vertex $v$ for which there is no $i$ such that $v$ is in copy of $G_i$ free over its basepoint, we freely attach to $v$ a copy of $G_k$, identifying $v$ with the basepoint. Call the resulting graph $G^+$, and note it satisfies (3) from Lemma \ref{lemma:imageconditions}. 

Using the 2-connectedness of the $\set{G_i}$ as in Lemma \ref{lemma:attachedcopies}, $G^+$ still satisfies (1) and (2) from Lemma \ref{lemma:imageconditions}.

We claim it is also still in $\neg \widehat \GG$, as we have only added copies of $G_k$. Suppose $\widehat H \in \widehat \GG$ embeds into $G^+$. Then $H \in \GG$ embeds into $\vee(G^+)$. 

As $G^+$ satisfies (2) from Lemma \ref{lemma:imageconditions}, $H$ cannot be a multicolored vertex. As $G^+ \in \neg \HH_1$, $H$ cannot be a colored copy of any of the $\set{G_i}$. Thus $H$ does not contain any $k$-colored vertices.

Consider the subgraph $A \subset G^+$ induced by all vertices which are not the basepoint of a freely-attached copy of $G_k$. Then $H$ must embed into $\vee(A)$. But then $\widehat H$ embeds into $A$ and thus into $G$.
\end{proof}

\begin{lemma} \label{lemma:transfer}
Let $\GG \subset \CC_k$, such that $\neg \GG \subset \CC^*_k$. Further suppose that the only graphs in $\GG$ containing a $k$-colored vertex are multicolored single vertices and colored copies of the $\set{G_i}$. Then $\neg (\widehat \GG \cup \HH_1 \cup \HH_2)$ has the JEP if and only if $\neg \GG$ has the JEP.
\end{lemma}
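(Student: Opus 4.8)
The plan is to derive this from the preceding machinery by establishing a back-and-forth correspondence between joint embeddings in $\neg\GG$ and joint embeddings in $\neg(\widehat\GG\cup\HH_1\cup\HH_2)$. For the forward direction, suppose $\neg\GG$ has the JEP and take $A,B\in\neg(\widehat\GG\cup\HH_1\cup\HH_2)$. By the previous lemma, $A$ embeds into $\widehat{A'}$ and $B$ embeds into $\widehat{B'}$ for some $A',B'\in\neg\GG$. Using the JEP for $\neg\GG$, there is $C'\in\neg\GG$ into which both $A'$ and $B'$ embed; then $\widehat{C'}$ contains $\widehat{A'}$ and $\widehat{B'}$ (since $\wedge$ preserves embeddings), hence contains copies of $A$ and $B$. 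The one thing to check here is that $\widehat{C'}\in\neg(\widehat\GG\cup\HH_1\cup\HH_2)$: it lies in $\neg\widehat\GG$ because a copy of $\widehat H\in\widehat\GG$ in $\widehat{C'}$ would, via $\vee$ and Lemma \ref{lemma:hatinverse}, yield a copy of $H$ in $C'$; and it lies in $\neg\HH_1\cap\neg\HH_2$ because $\widehat{C'}$ satisfies conditions (1) and (2) of Lemma \ref{lemma:imageconditions} by Lemma \ref{lemma:attachedcopies}, and those conditions are exactly what $\HH_1$ and $\HH_2$ forbid.

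For the reverse direction, suppose $\neg(\widehat\GG\cup\HH_1\cup\HH_2)$ has the JEP and take $A',B'\in\neg\GG$. Then $\widehat{A'},\widehat{B'}$ lie in $\neg(\widehat\GG\cup\HH_1\cup\HH_2)$ (same verification as above), so there is some $D$ in that class with $\widehat{A'},\widehat{B'}\into D$. By the previous lemma, $D$ embeds into $\widehat{C'}$ for some $C'\in\neg\GG$, and then $\widehat{A'},\widehat{B'}\into\widehat{C'}$; since $\wedge$ preserves embeddings, $A'$ and $B'$ both embed into $C'$, so $\neg\GG$ has the JEP. Throughout, I would also note that $A'\in\neg\GG\subset\CC^*_k$ is needed so that $\widehat{A'}$ is actually defined and the hypotheses of the previous lemma apply.

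I do not expect a serious obstacle: the real content has already been isolated in the preceding lemmas (that $\wedge$ preserves embeddings, that the image of $\wedge$ is characterized by Lemma \ref{lemma:imageconditions}, and that every graph in $\neg(\widehat\GG\cup\HH_1\cup\HH_2)$ completes into $\widehat{\neg\GG}$). The only mild subtlety is bookkeeping: making sure at each step that the graph we land in genuinely belongs to the class for which we are invoking the JEP, which amounts to repeatedly reusing the observation that images $\widehat{C'}$ of members of $\neg\GG$ automatically avoid $\HH_1$, $\HH_2$, and $\widehat\GG$. Assembling these two directions gives the biconditional.
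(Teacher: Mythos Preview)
Your proposal is correct and follows essentially the same route as the paper's proof: both directions hinge on the completion lemma (every graph in $\neg(\widehat\GG\cup\HH_1\cup\HH_2)$ embeds into some $\widehat{C'}$ with $C'\in\neg\GG$) together with the fact that $\wedge$ preserves embeddings in both directions. If anything, you are slightly more careful than the paper in explicitly verifying that $\widehat{C'}\in\neg(\widehat\GG\cup\HH_1\cup\HH_2)$, which the paper leaves implicit.
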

\begin{proof}
Suppose $\neg \GG$ has the JEP. Let $A, B \in \neg (\widehat \GG \cup \HH_1 \cup \HH_2)$. Extend them to $A^+, B^+ \in \widehat {\neg \GG}$. Then, there is some $C \in \neg \GG$ embedding $\vee(A^+), \vee(B^+)$. Thus $\widehat C$ embeds $A^+, B^+$, and so $A, B$ as well.

Now suppose $\neg (\widehat \GG \cup \HH_1 \cup \HH_2)$ has the JEP. Let $A, B \in \neg \GG$. Then there is some $C \in \neg (\widehat \GG \cup \HH_1 \cup \HH_2)$ embedding $\widehat A, \widehat B$. Extend $C$ to $C^+ \in \widehat {\neg \GG}$. Then $\vee(C^+)$ embeds $A, B$.
\end{proof}

In order to finally prove our main theorem, we must choose a suitable set $\set{(G_i, a_i)}$. The graphs must be 2-connected, form an antichain under embedding, and have no automorphism moving the basepoint. Finally, in order to have $\GG_\TT \subset \CC^*_k$, no colored version of them may embed into our canonical models $A^*, B^*$, and they must not be produced by our joint embedding process for the graphs with unary predicates. We ensure these last two points by having every edge be contained in a triangle.

\begin{notation} \label{not:wheel}
Let $W_n$ be the wheel graph on $n+1$ vertices, i.e. an $n$-cycle with an additional vertex adjacent to all others.
 
We let $G_i = W_{2i+5}$. The basepoint of $G_i$ will be the unique point of degree greater than $3$.
\end{notation}

The following fact for all wheel graphs is easy. Our restriction to $W_n$ for $n \geq 7$ and odd is only for continuity with the next section.

\begin{fact} \label{fact:wheel}
The wheel graphs $\set{W_n}$ are $2$-connected, have every edge contained in a triangle, and form an antichain under embedding. Furthermore, there is no automorphism moving our choice of basepoint.
\end{fact}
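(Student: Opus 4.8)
The plan is to verify each of the four claimed properties of the wheel graphs $W_n$ directly, treating $n \geq 3$ throughout (the restriction to odd $n \geq 7$ plays no role here). First, for $2$-connectedness: removing the hub of $W_n$ leaves the $n$-cycle, which is connected, and removing any rim vertex leaves a graph in which the hub is adjacent to all remaining rim vertices, hence connected; since $W_n$ has at least $3$ vertices, it is $2$-connected. Second, every edge lies in a triangle: a rim edge $\{u,v\}$ of the cycle together with the hub forms a triangle (the hub is adjacent to both), and a spoke edge $\{hub, v\}$ together with either cycle-neighbor of $v$ forms a triangle. So far these are immediate structural observations.

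The antichain claim is the one requiring a genuine argument, though still elementary. I would argue that $W_m$ embeds (as an induced subgraph) into $W_n$ only if $m = n$. The key invariant is the degree sequence: in $W_n$ the hub has degree $n$ and every rim vertex has degree $3$. Suppose $f \colon W_m \into W_n$ is an induced embedding with $m < n$ (the case $m = n$ forcing equality, and $m > n$ being symmetric by counting vertices). The hub $h_m$ of $W_m$ has degree $m \geq 3$ in $W_m$, so its image has degree $\geq m$ in $W_n$; if $m \geq 4$ this forces $f(h_m)$ to be the hub $h_n$ of $W_n$ (the only vertex of degree $>3$), and then the $m$ rim vertices of $W_m$, which induce an $m$-cycle all of whose vertices are adjacent to $h_m$, must map to $m$ rim vertices of $W_n$ all adjacent to $h_n$ and inducing an $m$-cycle; but the induced subgraph of $W_n$ on any proper subset of the rim is a disjoint union of paths, never a cycle, a contradiction. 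The small cases $m = 3$ (where $W_3 = K_4$) and the boundary behavior are handled separately by noting $K_4$ has no induced copy in $W_n$ for $n \geq 4$ since any $4$ vertices of $W_n$ include at least two non-adjacent rim vertices. Hence $\{W_n\}$ is an antichain.

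Finally, no automorphism moves the basepoint: for $n \geq 4$ the hub is the unique vertex of degree $n > 3$, so it is fixed by every automorphism; since we declared the basepoint of $G_i = W_{2i+5}$ to be this unique high-degree vertex and $2i+5 \geq 7 > 3$, we are done. The main obstacle is simply organizing the antichain argument cleanly around the "a proper induced subgraph of the rim cycle is a union of paths, hence acyclic" observation; everything else is routine case-checking.
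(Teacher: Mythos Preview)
Your proof is correct. The paper does not actually prove this statement: it is introduced with ``The following fact for all wheel graphs is easy'' and stated as a \textbf{Fact} without argument, so there is no proof in the paper to compare against. Your write-up supplies exactly the elementary details the paper omits; the only part requiring any care is the antichain claim, and your observation that a proper induced subgraph of the rim $n$-cycle is a union of paths (hence acyclic) is the clean way to dispatch it.
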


\begin{theorem} \label{theorem:inducedJEP}
  There is no algorithm that, given a finite set of forbidden induced subgraphs, decides whether the corresponding hereditary graph class has the JEP.
\end{theorem}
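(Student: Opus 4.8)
The plan is to combine the two main ingredients developed in the excerpt: the undecidability result for the language with unary predicates (Proposition~\ref{prop:unaryundecidable}), and the transfer machinery (Lemma~\ref{lemma:transfer}) that moves a finitely-constrained class with $k$ unary predicates to a finitely-constrained pure graph class while preserving the JEP. Concretely, given a tiling problem $\TT$, I would first write down the class $\GG_\TT \subset \CC_k$ with $k = 13$ (the twelve predicates of \S\ref{sec:unarypred} plus one extra ``dummy'' color), where the constraints of $\GG_\TT$ are exactly the forbidden (induced) subgraphs listed after Definition of $\GG_\TT$, now read as colored graphs; the dummy color $k$ appears in no non-trivial constraint. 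One checks the two hypotheses needed for Lemma~\ref{lemma:transfer}: that $\neg \GG \subset \CC^*_k$ (the colors partition the vertices and no colored copy of any $G_i = W_{2i+5}$ occurs), and that the only members of $\GG$ containing a $k$-colored vertex are multicolored single vertices and colored copies of the $\{G_i\}$.

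The first hypothesis requires adding to $\GG_\TT$ the constraints that force the colors to partition the vertex set (constraint~\ref{c:1} already forbids a vertex in two predicates, so one adds a forbidden ``uncolored vertex'') and the constraints forbidding every colored copy of each $W_{2i+5}$; by Fact~\ref{fact:wheel} every edge of a wheel lies in a triangle, and since the canonical models $A^*, B^*$ and the graphs produced by the joint-embedding procedure of \S\ref{sec:unarypred} are triangle-free on the relevant configurations (the coding gadgets and path/grid/tile structure introduce no triangles), forbidding these wheels does not disturb the argument of Proposition~\ref{prop:unaryundecidable}: $A^*, B^* \in \GG_\TT$ still, and the constructed witness $C$ still lies in $\GG_\TT$. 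Hence $\GG_\TT$ still has the JEP if and only if $\TT$ has a solution. The second hypothesis is immediate from the explicit list of forbidden subgraphs, since the dummy color was chosen to occur in none of them.

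Having verified the hypotheses, Lemma~\ref{lemma:transfer} gives that the pure graph class $\neg(\widehat{\GG_\TT} \cup \HH_1 \cup \HH_2)$ has the JEP if and only if $\GG_\TT$ does, hence if and only if $\TT$ has a solution. This class is finitely constrained: $\widehat{\GG_\TT}$ is the finite set $\{\widehat H : H \in \GG_\TT\}$ (each forbidden colored graph $H$ has a well-defined image $\widehat H$ under $\wedge$ since $H \in \CC^*_k$), and $\HH_1, \HH_2$ are finite by Definition~\ref{def:HHConstraints}. Moreover the whole construction — reading $\TT$, writing down the finite constraint set — is plainly effective. Therefore an algorithm deciding the JEP for finitely-constrained hereditary graph classes would decide whether an arbitrary tiling problem has a solution, contradicting Berger's theorem. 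This proves Theorem~\ref{theorem:inducedJEP}.

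The step I expect to require the most care is the one embedded in the second paragraph: checking that enlarging $\GG_\TT$ by the wheel constraints (and the ``fully colored'' constraints) is harmless for \emph{both} directions of Proposition~\ref{prop:unaryundecidable}. The easy direction needs $A^*, B^* \in \GG_\TT$, i.e. no $W_{2i+5}$ and no uncolored vertex appears in the canonical models — straightforward from their explicit description. The delicate direction needs that the joint-embedding procedure of \S\ref{sec:unarypred}, which takes $C_0 = A \sqcup B$ and only adds edges from $G^0$-vertices to $T^1$-vertices, never creates a copy of a wheel: since such a new edge joins two vertices whose neighborhoods (in $C_0$) are each disjoint-union pieces, and the only way a new triangle or wheel could form would need a common neighbor of a $G^0$-vertex and a $T^1$-vertex, one must confirm from constraint~\ref{c:1} and the coding that this cannot happen. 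Once that is in hand, everything else is bookkeeping, and the theorem follows.
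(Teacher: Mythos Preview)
Your approach is the paper's approach, and the bookkeeping about $k=13$, the dummy color, and the hypotheses of Lemma~\ref{lemma:transfer} is all fine. But the step you flag as ``requiring the most care'' actually fails as you have set it up, and the paper inserts an extra constraint precisely to repair it.

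You argue that a new triangle could only arise from a common neighbor, in $C_0$, of a $G^0$-vertex and a $T^1$-vertex, and that constraint~\ref{c:1} rules this out. That covers triangles containing exactly one new edge. It does not cover triangles containing \emph{two} new edges: take $g_1,g_2\in G^0$ adjacent in the same factor and both with coordinates $(n,m)$ (nothing in the constraint list forbids either adjacency or coincident coordinates for $G^0$-vertices), and let $t$ be a tile of type $\theta(n,m)$ attached to some $h\in G^1$ with coordinates $(n,m)$ in the other factor. Your procedure adds both $(g_1,t)$ and $(g_2,t)$, producing the triangle $g_1\,g_2\,t$. Once triangles can appear, the ``every edge of $W_{2i+5}$ lies in a triangle'' shortcut no longer closes the argument, and you would have to give a direct analysis ruling out wheels---which is possible but not what you have sketched.

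The paper's fix is exactly one line: in passing from $\GG_\TT$ to $\GG_\TT^*$, also forbid an edge between any two grid vertices. This constraint is compatible with $A^*,B^*$ (no such edges occur there) and is preserved by the joint embedding procedure (which only adds $G^0$--$T^1$ edges), so Proposition~\ref{prop:unaryundecidable} survives. With it in place, the two-new-edge triangle above is impossible, and combined with constraints~\ref{c:5}--\ref{c:6} (ruling out adjacent $T^1$-vertices receiving edges from a common $g$) the procedure genuinely creates no triangles, hence no $G_i$. Add that constraint to your $\GG_\TT$ and your proof goes through verbatim.
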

\begin{proof}
By Proposition \ref{prop:unaryundecidable}, it is undecidable whether $\GG_\TT$ has the JEP, as $\TT$ varies. We may modify $\GG_\TT$ to $\GG_\TT^*$ by introducing an extra color and forbidding all uncolored vertices. We also add constraints forbidding $\set{G_i}$, as well as constraints forbidding an edge between any two grid vertices. Because our joint embedding procedure only adds edges from grid vertices to tile vertices, it will not add edges between grid vertices, and so respects this constraint.

 Note that our canonical models contain no triangles (as the coding vertices break up edges), and thus no copies of the $\set{G_i}$, and they also satisfy the new constraints forbidding edges between grid vertices. Our joint embedding procedure only adds edges from grid vertices to tile vertices, and edges are only added to one tile vertex in a given set of tiles. By constraints \ref{c:5}-\ref{c:6}, there are no edges between tile vertices in distinct tile sets, so our joint embedding procedure will produce no triangles. For any $i$, every edge of $G_i$ is contained in a triangle, so no copies of $G_i$ will be produced.

 We may thus apply Lemma \ref{lemma:transfer} to $\GG^*_\TT$ to produce a family of finitely-constrained hereditary graph classes for which the JEP is undecidable as $\TT$ varies.
\end{proof}

\section{The joint homomorphism property} \label{sec:jhp}

A class of structures has the \emph{joint homomorphism property (JHP)} if, given any two structures in the class, there is a third that admits homomorphisms from both. This notion naturally arises in infinite-domain constraint satisfaction problems. For example, the constraint satisfaction problem for a theory can be realized as the constraint satisfaction problem for a particular model if and only if the models of the theory have the JHP \cite{Bodthes}. The following question was posed by Bodirsky in January 2018 (personal communication).

\begin{question}
  Is there an algorithm that, given a finite set of forbidden induced subgraphs, decides whether the corresponding hereditary graph class has the JHP?
\end{question}

In this section, our main result is a negative answer to this question, obtained by modifying our construction for the JEP.

\begin{theorem} \label{theorem:jhpUndecidable}
 There is no algorithm that, given a finite set of forbidden induced subgraphs, decides whether the corresponding hereditary graph class has the JHP.
\end{theorem}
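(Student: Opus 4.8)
The plan is to mimic the proof of Theorem~\ref{theorem:inducedJEP} as closely as possible, replacing ``embedding'' by ``homomorphism'' throughout, and identifying exactly which constraints need to be strengthened so that the JHP of the resulting class is again equivalent to solvability of the tiling problem $\TT$. The essential point is that a homomorphism is more permissive than an embedding: it may identify vertices and it may add edges. So the first task is to audit the construction of $\GG_\TT$ and its canonical models $A^*,B^*$ and ask which of the arguments in Sections~\ref{sec:JEPtoTiling} and the following subsection survive when ``$C$ jointly embeds $A^*$ and $B^*$'' is weakened to ``$C$ receives homomorphisms from $A^*$ and $B^*$''. I expect the easy direction (JHP $\Rightarrow$ tiling) to need the most care, and the delicate direction (tiling $\Rightarrow$ JHP) to go through essentially verbatim, since the disjoint-union-plus-added-edges construction $C$ of Section~4.6 is in particular a target for homomorphisms from $A$ and $B$, and the verification that $C \in \GG_\TT$ is unchanged.

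For the forward direction, the obstacle is that under a homomorphism $f\colon A^* \to C$ the path, grid, and tile structure of $A^*$ may collapse: distinct path vertices could be identified, a grid vertex could be sent onto a path vertex, spurious edges could appear, and so on. The constraints as written (disjointness of unary predicates, uniqueness of $\rightarrow$-predecessors, etc.) are preserved by taking induced subgraphs but not by homomorphic images, so I would need to add constraints that are ``homomorphism-rigid'' for the relevant portion of the structure. Concretely: because the unary predicates are disjoint (constraint~\ref{c:1}) and a homomorphism of colored graphs must respect colors, the color classes cannot merge; one then argues, using the coding-vertex constraints together with constraints \ref{c:2}--\ref{c:4}, that the image of the origin, path, and grid of $A^*$ (resp.\ $B^*$) still carries a well-defined coordinate system, so that constraints \ref{c:7}--\ref{c:9} again force a propagated tiling and a solution to $\TT$ can be read off. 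Some new forbidden configurations may be required to rule out a grid vertex and a tile vertex getting merged, or two tiles of different types in one tileset getting identified; each such configuration is connected and finite, hence a legitimate additional constraint, and I would check it does not interfere with the joint-homomorphism procedure of the other direction (which only ever adds edges from $G^0$-vertices to $T^1$-vertices in a different factor).

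For the reverse direction, fix a solution $\theta\colon \N^2 \to [\tile]$; given $A,B$ in the modified class, form $C_0 = A \sqcup B$ and run the same edge-adding procedure as in Section~4.6, producing $C$. The inclusions $A \hookrightarrow C$ and $B \hookrightarrow C$ are in particular homomorphisms, so it suffices to show $C$ lies in the modified class, and this is exactly the content of the lemmas of Section~4.6 (satisfaction of constraints \ref{c:2}--\ref{c:9}), whose proofs only used that the procedure adds edges between distinct factors and only from $G^0$ to $T^1$; the same proofs apply to any additional homomorphism-rigidifying constraints, since those too are connected configurations not involving both a $G^0$- and a $T^1$-vertex. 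Finally, one pushes through the translation to the pure graph language exactly as in Section~\ref{sec:movingToGraphs}: the maps $\wedge$ and $\vee$, together with Lemma~\ref{lemma:transfer}, should transfer to the homomorphism setting once one checks that $\wedge$ preserves and reflects homomorphisms (the wheel graphs $W_n$ are cores and form an antichain, so a homomorphism $\widehat A \to \widehat B$ still maps attached wheels to attached wheels and basepoints to basepoints), and that the completion algorithm of Lemma~\ref{lemma:transfer} still works. The main obstacle, then, is pinning down the minimal set of extra constraints that make the coordinate system homomorphism-robust without breaking the reverse direction; everything else is a routine adaptation of the JEP argument.
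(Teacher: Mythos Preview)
Your plan in the unary-predicate language is essentially sound: because homomorphisms of relational structures preserve unary predicates, constraint~\ref{c:1} prevents differently-colored vertices from being identified, and one can then check that constraints~\ref{c:2}--\ref{c:6} force any homomorphism of $A^*$ or $B^*$ into a member of $\GG_\TT$ to be injective on path, grid, and tile vertices (e.g.\ identifying $p^0_i$ with $p^0_j$ would, by induction on predecessors, force the origin to acquire a predecessor). So a tiling can indeed be read off as in \S\ref{sec:JEPtoTiling}, and the reverse direction is unchanged.

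The genuine gap is in the transfer to the pure graph language. Your claim that $\wedge$ reflects homomorphisms because ``the wheel graphs $W_n$ are cores and form an antichain'' does not go through. Being a core only says that $G_i$ has no proper \emph{endo}morphism; it does not prevent a homomorphism $\widehat A \to \widehat B$ from sending an attached $G_i$ onto a proper homomorphic image of $G_i$ sitting inside some block of $B$. To rule this out you must explicitly forbid, in the graph class, every proper homomorphic image of each $G_i$ used---and then, for $\widehat A$ to lie in that class for arbitrary $A \in \neg\GG$, you must forbid those images in the colored class as well, and verify that neither the canonical models nor the joint-embedding procedure create them. None of this is in your outline, and Lemma~\ref{lemma:transfer} does not port to homomorphisms as automatically as you suggest.

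The paper avoids a general homomorphism-transfer lemma entirely and instead rigidifies only the canonical models. Over every non-adjacent pair of basepoints in $\widehat{A^*},\widehat{B^*}$ it freely attaches a copy of $W_5$, then adds to the graph class the constraints forbidding $K_4$ and all proper homomorphic images of the $G_i$. The $W_5$'s act like a non-edge relation (identifying two such basepoints, or adding an edge between them, would produce a $K_4$), while $\HH_1$ together with the ban on proper homomorphic images prevents any attached $G_i$ from collapsing or merging. These constraints force every homomorphism of the augmented canonical models into a class member to be an embedding, reducing the forward direction to the JEP argument; the reverse direction only needs the observation that the joint-embedding procedure creates no triangles, hence no $K_4$ and no homomorphic image of any $G_i$.
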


Theorem \ref{theorem:jhpUndecidable} will be proven by modifying our proof of Theorem \ref{theorem:inducedJEP}. The reader should be familiar with the brief sketch of the proof of Theorem \ref{theorem:inducedJEP} appearing in the introduction and the discussion at the beginning of Section \ref{sec:movingToGraphs} about removing the unary predicates; relevant results and definitions will be recalled or referenced as needed.

Unlike the JEP, the JHP is sensitive to changing between quantifier-free interdefinable languages. For example, we get the following as a corollary to Theorem \ref{theorem:inducedJEP}, but will later have to work much more without the non-edge relation present.

\begin{proposition} \label{prop:nonEdgeJHP}
Work in a language with relations for edges and non-edges. Then there is no algorithm that, given a finite set of forbidden induced subgraphs, decides whether the corresponding hereditary graph class has the JHP.
\end{proposition}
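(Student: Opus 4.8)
The plan is to observe that in a language that has a relation symbol for non-edges as well as for edges, every homomorphism between finite graphs is automatically an embedding, so that the JHP and the JEP coincide for such classes, and then to invoke Theorem~\ref{theorem:inducedJEP}.

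First I would make the homomorphism observation precise. Work with $\mathcal L$-structures, $\mathcal L = \{E, N\}$, that are genuine graphs: $E$ and $N$ are symmetric and irreflexive, and every pair of distinct vertices lies in exactly one of $E, N$. If $f\colon G \to H$ is an $\mathcal L$-homomorphism and $u \neq v$ in $G$, then $\{u,v\}$ lies in $E^G$ or in $N^G$; in either case $\{f(u), f(v)\}$ lies in the corresponding relation of $H$, which is irreflexive, so $f(u) \neq f(v)$. Hence $f$ is injective, and since it preserves both $E$ and $N$ it both preserves and reflects edges, i.e.\ it is an embedding; conversely embeddings are homomorphisms. Thus for any hereditary class of such structures the JHP holds if and only if the JEP does.

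Next I would carry out the (essentially bookkeeping) reduction. Take a finite set $\mathcal F$ of forbidden induced subgraphs witnessing the undecidability in Theorem~\ref{theorem:inducedJEP}, regard each member of $\mathcal F$ as an $\mathcal L$-structure by adjoining the complementary non-edge relation, and let $\mathcal D$ be the hereditary class of finite $\mathcal L$-graphs omitting these as induced substructures. Since an embedding in the pure graph language already preserves and reflects $E$, it preserves and reflects $N$ as well, so the embedding order on $\mathcal D$ coincides with that on the original pure-graph class omitting $\mathcal F$; in particular $\mathcal D$ has the JEP exactly when that class does. Combining with the previous paragraph, $\mathcal D$ has the JHP exactly when the original graph class has the JEP, and the latter is undecidable by Theorem~\ref{theorem:inducedJEP}. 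As $\mathcal D$ is itself specified by the finite set $\mathcal F$ of forbidden induced subgraphs, and the passage from $\mathcal F$ (as input) to $\mathcal F$ (now read in $\mathcal L$) is trivially effective, this is a genuine reduction.

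I do not expect any real obstacle here: the only thing needing care is the routine verification that the non-edge relation forces homomorphisms to be injective and edge-reflecting, together with the remark that the resulting $\mathcal L$-class is the same class as before. The point of isolating this proposition is precisely that the argument collapses once $N$ is discarded — then a homomorphism may identify vertices and turn non-edges into edges — so that Theorem~\ref{theorem:jhpUndecidable} will require a genuinely new construction rather than this soft reduction.
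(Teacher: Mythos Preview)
Your proof is correct and follows essentially the same approach as the paper: observe that with both $E$ and $N$ present every homomorphism is an embedding, then transfer the JEP undecidability from Theorem~\ref{theorem:inducedJEP}. The only cosmetic difference is that the paper explicitly adds to the forbidden set the two-point structures carrying both relations or neither, thereby enforcing that $E$ and $N$ are complementary, whereas you build this into your definition of ``$\mathcal L$-graph''; if the ambient class is taken to be all $\{E,N\}$-structures you would need to include those two constraints in $\mathcal D$ as well, but this changes nothing of substance.
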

\begin{proof}
Our goal is to alter the forbidden structures so that any homomorphism is actually an embedding. 

Suppose we are given a finite set $\CC^{red}$ of forbidden induced subgraphs in the language with just the edge relation. Let $\CC$ be the set of graphs, in the enriched language, with the non-edge relation added between any non-adjacent points. Let $\CC^+$ be the union of $\CC$ with the graphs on two points in which either both relations or neither relation is present, ensuring the relations act as edges and non-edges. Then we claim $\neg\CC^{red}$ has the JEP if and only if $\neg \CC^+$ has the JHP.

First, note that a homomorphism between two structures in $\neg \CC^+$ must be an embedding. Second, we may define embedding-preserving maps between $\neg\CC^{red}$ and $\neg\CC^+$ as follows. Given a graph $A \in \neg\CC^{red}$, let $A^+ \in \neg\CC^+$ be the structure obtained by adding the non-edge relation between non-adjacent vertices. Given $A \in \neg\CC^+$, let $A^{red} \in \neg \CC^{red}$ be the graph obtained by forgetting the non-edge relation. 
\end{proof}

\begin{definition}
We will say a homomorphism is {\em proper} if it is not an isomorphism.
\end{definition}

Recall that $W_n$ is a wheel graph, as in Notation \ref{not:wheel}.

\begin{fact}
Every proper homomorphic image of $W_5$ contains a copy of $K_4$.
\end{fact}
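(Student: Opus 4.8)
The plan is to analyze what a proper homomorphism out of $W_5$ can do. Recall $W_5$ is the $5$-cycle $v_1 v_2 v_3 v_4 v_5$ together with a hub $h$ adjacent to all five cycle vertices; so $W_5$ has $6$ vertices. A homomorphism $f$ is proper exactly when it identifies at least two vertices. First I would observe that $f$ can never identify two adjacent vertices (that would force a loop, which simple graphs forbid), so $f$ can only collapse vertices that form an independent set. The hub $h$ is adjacent to everything, hence $h$ is never identified with anything, so any proper $f$ must identify two nonadjacent cycle vertices. Since the $5$-cycle has independence number $2$, the only possible identifications are of pairs of vertices at distance $2$ on the cycle, e.g. $v_1 \sim v_3$, and one could in principle make more than one such identification.

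The key step is a short case analysis on which nonadjacent pair (or pairs) gets identified. Consider identifying $v_1$ and $v_3$ (all other cases are symmetric, and identifying further pairs only makes the image smaller, hence the $K_4$ persists). In the quotient, the image vertex $\bar v$ (the common image of $v_1,v_3$) is adjacent to $v_2$ (neighbor of both), to $v_5$ (neighbor of $v_1$), to $v_4$ (neighbor of $v_3$), and to $h$. Meanwhile $h$ is adjacent to $\bar v, v_2, v_4, v_5$, the edge $v_4 v_5$ survives, and $v_2$ is still adjacent to $h$ and $\bar v$. I would then exhibit the explicit $K_4$: the four vertices $\{\bar v, v_4, v_5, h\}$ are pairwise adjacent in the quotient — $\bar v h$, $\bar v v_4$, $\bar v v_5$, $h v_4$, $h v_5$, $v_4 v_5$ are all edges — so the homomorphic image contains $K_4$. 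If additionally $v_2$ is identified with $v_5$, say, one checks the relevant edges still force a $K_4$ (indeed the image then has only $4$ vertices and one verifies it is all of $K_4$); similar bookkeeping handles any further legal identification.

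The mild subtlety, and the only place one must be a little careful, is ensuring the case analysis is genuinely exhaustive: one must use that the hub cannot be collapsed and that the cycle's independent sets are exactly the pairs at cyclic distance $2$, so that up to the dihedral symmetry of $W_5$ there is essentially one identification to make, and any proper homomorphism factors through such an identification (possibly followed by more). Since $K_4$ is preserved under homomorphisms (it maps onto $K_4$ or onto $K_3$ or smaller only if two of its vertices are identified, but they are pairwise adjacent, so it maps isomorphically), once a $K_4$ appears after the first identification it survives all subsequent ones. I expect this to be entirely routine; there is no real obstacle, just the need to organize the finitely many cases cleanly.
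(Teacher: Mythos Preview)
The paper states this as a \texttt{Fact} without proof, so there is nothing to compare against directly. Your case analysis is correct and is the natural argument: the hub $h$ is adjacent to every other vertex and hence cannot be identified with anything, the independence number of $C_5$ is $2$, so up to symmetry the only identification is $v_1\sim v_3$, and then $\{h,\bar v,v_4,v_5\}$ is a $K_4$ in the quotient. Your observation that a $K_4$, being a clique, survives any further homomorphism handles additional identifications.

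One small point to tighten. The paper defines a proper homomorphism as any homomorphism that is not an isomorphism, and it later applies this Fact to rule out both identifying two non-adjacent basepoints \emph{and} adding a new edge between them inside an embedded $W_5$ (see the proof of Theorem~\ref{theorem:inducedJHP}). So the intended reading of ``proper homomorphic image'' includes surjective bijections onto a graph with strictly more edges. Your sentence ``a homomorphism $f$ is proper exactly when it identifies at least two vertices'' excludes this case. The repair is a single line: the only non-edges of $W_5$ are of the form $v_iv_{i+2}$, and adding any one of them produces $K_4$ on $\{h,v_i,v_{i+1},v_{i+2}\}$; adding edges to any of your quotients likewise only enlarges the graph and the $K_4$ persists. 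With that addition your proof covers everything the paper actually uses.
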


As in Proposition \ref{prop:nonEdgeJHP}, the plan for proving Theorem \ref{theorem:jhpUndecidable} will be to modify our graphs so that any $C$ witnessing the JHP also witnesses the JEP, at least in our canonical models. In Proposition \ref{prop:nonEdgeJHP}, we did this by adding the non-edge relation between any two non-adjacent vertices to make our structures clique-like. Here we do the following.
\begin{enumerate}
\item Forbid $K_4$.
\item In our canonical models, over any two non-adjacent basepoints of copies $(G_{i_1}, a_{i_1})$ and $(G_{i_2}, a_{i_2})$ (the graphs we are using to code unary predicates, see Definition \ref{def:graphAntichain}) freely join a copy of $W_5$, while keeping the vertices non-adjacent (i.e., first take the disjoint union with a copy of $W_5$, then identify two non-adjacent points of the new copy of $W_5$ with $a_{i_1}$ and $a_{i_2}$).
\end{enumerate}

The procedure above ensures that homomorphisms cannot identify the basepoints of the $(G_i, a_i)$ in our new canonical models, nor add edges between them, as this would create a copy of $K_4$. Thus the copies of $W_5$ act similarly to the non-edge relation in Proposition \ref{prop:nonEdgeJHP}. The constraint set $\HH_1$ from Definition \ref{def:HHConstraints} ensures that we cannot add an edge, nor make any identification, between a non-basepoint and any point outside the copy of $G_i$ it lies in. Thus the only possible issue is if the homomorphisms of our new canonical models fail to be embeddings within a single copy of some antichain element $G_i$.

This last possibility will be removed by forbidding all proper homomorphic images of each $G_i$ that we use from our antichain. However, these forbidden homomorphic images of $G_i$ might embed into $G_i$, or some other $G_j$, but our choice of $\set{G_i}$ will prevent this.

\begin{lemma} \label{lemma:wheel anti}
The wheel graphs, $\set{W_i}$ form an antichain under homomorphism. Also, for $i$ odd, $W_i$ is a core, i.e. any endomorphism is an automorphism.
\end{lemma}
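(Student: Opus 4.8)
The plan is to split the statement into its two assertions and handle them separately, in both cases exploiting the structural fact that $W_n$ has a \emph{hub} vertex of degree $n$ adjacent to every vertex of an $n$-cycle, together with the odd-cycle hypothesis which controls the chromatic number. For the core claim, recall that for $i$ odd, $W_i$ consists of an odd cycle $C_i$ plus a hub adjacent to all of $C_i$; since $\chi(C_i)=3$ for $i$ odd, we have $\chi(W_i)=4$. First I would observe that any endomorphism $\varphi$ of $W_i$ must be surjective: if $\varphi$ were not surjective its image would be a proper subgraph, hence (being a homomorphic image) would still require $4$ colors while embedding in $W_i$ minus a vertex, and one checks directly that deleting any vertex of $W_i$ leaves a graph with chromatic number $\le 3$ (deleting the hub leaves $C_i$; deleting a rim vertex leaves a ``fan'' which is $3$-colorable for $i$ odd). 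Hence $\varphi$ is a bijective endomorphism of a finite graph, i.e. an automorphism, so $W_i$ is a core.

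For the antichain claim, suppose $W_i \to W_j$ via a homomorphism $f$, with $i \ne j$, both at least $7$ and odd (the range we actually use; I will note the general statement separately if needed). Since $W_i$ contains $K_4$ (hub plus any edge of the rim plus\ldots\ actually hub plus two adjacent rim vertices gives a triangle, and to get $K_4$ we need the odd cycle to have a chord — careful here). Let me instead argue via chromatic number and the hub: $f$ maps the hub $h_i$ to some vertex $v$ of $W_j$, and maps the rim cycle $C_i$ into the neighborhood $N_{W_j}(v)$. If $v$ is the hub of $W_j$ then $N_{W_j}(v) = C_j$, so $f$ restricts to a homomorphism $C_i \to C_j$ of odd cycles; since odd cycles form an antichain under homomorphism (a homomorphism $C_i \to C_j$ of odd cycles forces $j \le i$, and by symmetry $i=j$ for a two-way map — but here we only have one direction, so I would invoke the odd-girth obstruction: $C_j \to C_i$ would be needed for equality, and more to the point the existence of $W_i \to W_j$ \emph{and} the reverse is what ``antichain'' requires, OR antichain means no homomorphism in either direction between distinct members — I must check which the paper intends and in either case the odd-girth of $W_n$, which is $3$, does not suffice, so I should instead use that $W_n \to W_m$ with $m$ odd forces\ldots). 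The cleanest route: for $i$ odd and any $m$, a homomorphism $W_i \to W_m$ sends the rim $C_i$ into $N(v)$ for $v = f(h_i)$; the subgraph of $W_m$ induced on $N(v)$ is, when $v$ is a rim vertex, a path $P_3$ (the hub and two rim-neighbors, hub adjacent to both but the two rim-neighbors non-adjacent since $m \ge 5$), which is bipartite and cannot receive an odd cycle; so $v$ must be the hub of $W_m$, forcing a homomorphism $C_i \to C_m$, which for odd $i$ exists iff $m \le i$ and $m$ odd, or $m$ even — in all cases this pins down a numerical relation, and combining with the symmetric hypothesis (if antichain is two-sided) or with injectivity on vertex count (noting $|W_i| = |W_m|$ would be forced in a retraction situation) yields $i = m$.

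\textbf{The main obstacle.} The delicate point is nailing down exactly which ``homomorphism $C_i \to C_j$ of odd cycles'' facts are available and whether ``antichain under homomorphism'' in this paper means ``no homomorphism either way between distinct members'' — the standard fact is that for odd $i, j$, there is a homomorphism $C_i \to C_j$ iff $i \ge j$, so $\{C_i : i \text{ odd}\}$ is \emph{not} an antichain in the one-directional sense, only in the two-directional (hom-equivalence) sense. Since the paper only \emph{uses} this lemma to forbid proper homomorphic images, what is really needed is that no $W_i$ with $i$ odd maps homomorphically \emph{onto} (or into) a distinct $W_j$ from the chosen family; I expect the intended reading makes this work because the chosen $G_i = W_{2i+5}$ all have the \emph{same odd girth} $3$ but distinct orders, and a surjective homomorphism cannot increase chromatic number while the core property forces non-surjective homomorphisms between equinumerous cores to fail. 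So the real work is a careful case analysis of where the hub goes and an appeal to the neighborhood-of-a-rim-vertex being bipartite; once that reduces everything to odd cycles, the numerics close it out. I would present the neighborhood argument as the crux and relegate the odd-cycle bookkeeping to a sentence.
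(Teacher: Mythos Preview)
Your route diverges from the paper's. For the core claim the paper simply cites it as folklore; your chromatic-number argument is a perfectly good self-contained substitute. For the antichain claim the paper gives a two-line degree argument: if $\phi\colon W_i\to W_j$ with $i\ne j$ is a homomorphism, it cannot be an embedding by Fact~\ref{fact:wheel}, so it identifies vertices or adds an edge, and ``in either case $\phi(W_i)$ will have at least two vertices of degree greater than $3$'', whereas $W_j$ has only one.

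Your unease in the final paragraph is well founded --- in fact it is fatal to the lemma as stated. The wheel graphs do \emph{not} form an antichain under homomorphism. For odd $i>j\ge 5$ there is a homomorphism $C_i\to C_j$ (wrap the longer odd cycle onto the shorter), and sending hub to hub extends this to $W_i\to W_j$; explicitly $W_7\to W_5$ via the rim map $0,1,2,3,4,5,6\mapsto 0,1,2,3,4,0,1$. The same example defeats the paper's degree argument: because the two rim identifications are coordinated (the rim-neighbours of the merged pairs are themselves merged), every vertex in the image still has degree exactly $3$ except the hub. So neither your reduction to odd cycles nor the paper's degree count can establish the claim, because the claim is false; your hub-to-rim case analysis actually proves the correct statement, namely that for odd $i,j\ge 5$ one has $W_i\to W_j$ iff $i\ge j$, so these wheels form a chain rather than an antichain in the homomorphism order.

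What you should carry forward is exactly what you isolate at the end: the paper only needs that no \emph{proper} homomorphic image of one of the chosen $G_i=W_{2i+5}$ embeds in another $G_j$ from the finite list actually used, together with the core property. Be aware, though, that at least one downstream invocation (in the proof of Lemma~\ref{lemma:unaryCanonical}, where a homomorphic image of $G_i$ is claimed not to fit inside a single attached $W_5$) is hit by the very same $W_7\to W_5$ counterexample, so the repair is not purely local to this lemma.
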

\begin{proof}
The fact the odd wheels are cores is standard, e.g. see Example 2.23 of \cite{HT}.

Suppose $\phi$ is a homomorphism from $W_i$ to $W_j$. By Fact \ref{fact:wheel}, we already know $\phi$ cannot be an embedding, so it must either identify points or add edges. In either case, $\phi(W_i)$ will have at least two vertices of degree greater than 3. But $W_j$ has only one vertex of degree greater than 3.
\end{proof}

\begin{lemma} \label{lemma:core}
 Any homomorphic image of $W_i$ is $2$-connected.
\end{lemma}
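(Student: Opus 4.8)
My plan is to argue that a homomorphic image of $W_i = W_n$ is obtained from $W_n$ by a sequence of vertex identifications and edge additions, and that neither operation can destroy 2-connectedness here, given the special structure of the wheel. First I would recall the relevant vocabulary: a homomorphic image $\phi(W_n)$ is a graph on the vertex set $\phi(V(W_n))$ where two classes are adjacent iff some pair of representatives was adjacent in $W_n$; equivalently it is built from $W_n$ by contracting the partition classes of $\ker\phi$ (which are independent sets, since $W_n$ has no loops in its image) and possibly adding extra edges. Since adding edges to a 2-connected graph keeps it 2-connected, it suffices to show that any graph obtained from $W_n$ by identifying vertices along an independent set of classes is 2-connected. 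So the core claim to isolate is: if $P$ is a partition of $V(W_n)$ into independent sets, then the quotient multigraph, viewed as a simple graph, has no cutvertex.

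The key structural step is to exploit the hub. Let $c$ be the central vertex of $W_n$, adjacent to all $n$ rim vertices; its class $[c]$ in the quotient is adjacent to the image of every rim vertex (none of which lies in $[c]$, since $[c]$ is independent and $c$ is adjacent to all rim vertices, so $[c] = \{c\}$ in fact). Thus in the image, $[c]$ is adjacent to every other vertex, i.e. the image is the join of the single vertex $[c]$ with the image of the rim cycle $C_n$ under the induced partition. A graph of the form $K_1 * H$ is 2-connected as soon as $H$ is nonempty and has no isolated... more precisely $K_1 * H$ is 2-connected iff $H$ has at least one edge, or $H$ has at least two vertices; let me be careful: $K_1 * H$ has a cutvertex only if that cutvertex is in $H$ and $H$ minus it is disconnected while staying joined to $[c]$ — but $[c]$ is adjacent to everything, so $K_1 * H$ is 2-connected whenever $|V(H)| \geq 2$, and it is $K_2$ (still "2-connected" by the usual convention, or trivially connected) when $|V(H)| = 1$. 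The image of $C_n$ under the induced partition into independent sets has at least $\lceil n/2 \rceil \geq 2$ vertices when $n \geq 3$, so the image of $W_n$ is $K_1$ joined to a graph on at least two vertices, hence 2-connected.

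I expect the main obstacle to be bookkeeping about the edge case and about what exactly "homomorphic image" allows: in particular, verifying that $[c]$ really is a singleton class (using that the partition classes must be independent, which follows from $\phi(W_n)$ being a simple graph / from the definition of core and the convention in use), and handling $n = 3$ or small identifications where $H$ degenerates to a single vertex so that $\phi(W_n)$ could be $K_2$ or $K_3$ or $K_4$ — all of which are still 2-connected under the standard convention, so nothing breaks, but it should be stated. An alternative, perhaps cleaner, route avoiding the join observation: directly show the image has no cutvertex by checking that for any two vertices there are two internally disjoint paths, using the two arcs of the rim cycle together with the hub; but the join argument is shorter. I would present the join argument, stating explicitly that adding edges preserves 2-connectedness and that identifying vertices in independent classes of $W_n$ fixes the hub and leaves a spanning-complete vertex in the quotient.
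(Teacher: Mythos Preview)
Your plan shares the paper's key observation---that the hub $c$ must map to a singleton class, so the image has a universal vertex---but packages it differently, and in doing so you leave a genuine gap. Your assertion that ``$K_1 * H$ is 2-connected whenever $|V(H)| \geq 2$'' is false: take $H$ to be two isolated vertices and $K_1 * H$ is the path $P_3$, with the apex a cutvertex. What you actually need is that $H$ is \emph{connected}, and you never check this. In your situation $H$ is the quotient of the rim cycle $C_n$, which is indeed connected (homomorphic images of connected graphs are connected), so the fix is one line---but it is precisely the step you skipped when you wrote ``the cutvertex [can only be] in $H$''. Separately, your lower bound of $\lceil n/2 \rceil$ on the number of classes in the rim quotient is wrong (for $n=6$ the rim is bipartite, two classes suffice); the correct bound is $\chi(C_n) \in \{2,3\}$, which still gives you the $\geq 2$ you need.

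The paper's proof avoids the join formalism and argues directly by contradiction: if the image $H$ had a cutvertex $v$, then $W_i \setminus \phi^{-1}(v)$ would be disconnected; but either $c \notin \phi^{-1}(v)$ (so $c$ still connects everything) or $\phi^{-1}(v) = \{c\}$ (so the remaining rim cycle is connected). This is essentially the same two-case analysis hidden inside your join argument---checking that neither the apex nor a rim-class is a cutvertex---but phrased on the domain side rather than the image side, which sidesteps the need to explicitly verify connectedness of the rim quotient.
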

\begin{proof}
Now let $\phi \colon W_i \to H$ be a homomorphism. Suppose $H$ becomes disconnected upon removing a vertex $v$. Then $W_i$ becomes disconnected upon removing the preimage of $v$. Let $c \in W_i$ be the vertex connected to all others. If $c \not\in \phi^{-1}(v)$, then $W_i$ is still connected after removing $\phi^{-1}(v)$. If $c \in \phi^{-1}(v)$, then no other point is in $\phi^{-1}(v)$, so again $W_i$ remains connected after its removal.
\end{proof}

\begin{notation}
As in the previous section, we let $G_i = W_{2i+5}$. The basepoint $a_i$ of $G_i$ will be the unique point of degree greater than $3$.
\end{notation}

\begin{definition}
Given a graph $G$, we construct an \emph{augmented copy of $G$}, denoted $G^+$, as follows. First, we start with a copy of $G$. Then over every non-adjacent pair of vertices, we freely join a copy of $W_5$, identifying that pair of vertices with a pair of non-adjacent vertices in $W_5$.
\end{definition}

We will use $A^+, B^+$ to denote the augmented copies of our canonical models $A^*, B^*$, where the new vertices we have added are marked with a new unary predicate $C_5$. Note that $A^+, B^+$ contain $A^*, B^*$ as induced subgraphs.

\begin{lemma} \label{lemma:unaryCanonical}
$A^+, B^+$ contain no homomorphic images, including embeddings, of any of the $\set{G_i}$, nor any copies of $K_4$.
\end{lemma}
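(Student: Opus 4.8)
The plan is to analyze the block structure of $A^+$ and $B^+$ and to exploit the fact, recorded in Lemma~\ref{lemma:core}, that any homomorphic image of $W_i$ is $2$-connected, hence lies inside a single block of its target. First I would identify the blocks of $A^+$ and $B^+$. Recall $A^*, B^*$ contain no triangles at all, since the coding vertices in $C_1,\dots,C_4$ break up every ``special'' edge; consequently every block of $A^*$ (and of $B^*$) is a single edge or a single vertex. Passing to the augmented copies, the only $2$-connected subgraphs we introduce are the freely joined copies of $W_5$ attached over non-adjacent pairs of basepoints. Since these $W_5$'s are freely joined (sharing only the two identified vertices with the rest, and those two vertices being non-adjacent), no two of them, and no $W_5$ together with an original edge, can merge into a larger block. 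Hence the blocks of $A^+$ and $B^+$ are: single vertices, single edges (the edges already present in $A^*, B^*$), and the attached copies of $W_5$.

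Now suppose some $G_i = W_{2i+5}$ had a homomorphic image $\phi(G_i)$ inside $A^+$ (the argument for $B^+$ is identical). By Lemma~\ref{lemma:core}, $\phi(G_i)$ is $2$-connected, so it is contained in a single block, which must therefore be one of the attached copies of $W_5$. But then $\phi$ is a homomorphism from $W_{2i+5}$ into $W_5$, and since $2i+5 \geq 7 > 5$ for all $i \in \N$, this contradicts Lemma~\ref{lemma:wheel anti}, which says the wheels form an antichain under homomorphism. The same reasoning rules out a copy of $K_4$: $K_4$ is $2$-connected, so a copy of it would lie in a single block, i.e. in an attached $W_5$; but $W_5$ contains no $K_4$ (its only vertex of degree greater than $3$ is the hub, so $W_5$ has no four mutually adjacent vertices). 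This also covers embeddings of the $G_i$ as a special case of homomorphic images, completing the proof.

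The one point requiring care — and the main obstacle — is the block-structure claim: I must be sure that freely joining the $W_5$'s over \emph{non-adjacent} pairs of basepoints really does keep everything in separate blocks, and in particular that distinct attached $W_5$'s cannot share an edge or combine with pre-existing structure into a bigger $2$-connected piece. The key is that each attached $W_5$ meets the rest of $A^+$ only in its two identified vertices, which are non-adjacent in that $W_5$; removing either one of them disconnects that copy from the remainder, so no block can straddle the attachment. This is exactly the kind of ``free join'' bookkeeping already used in Lemma~\ref{lemma:attachedcopies}, and I would phrase it in parallel to that argument.
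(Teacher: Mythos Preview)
Your block-decomposition argument has a genuine gap: the claimed block structure of $A^+$ and $B^+$ is wrong.

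First, $A^*$ itself already has nontrivial blocks. For instance, a diagonal grid vertex $g = g^0_{i,i}$ satisfies both $\Pi^0_1(g, p^0_i)$ and $\Pi^0_2(g, p^0_i)$, witnessed by coding vertices $c_3 \in C_3$ and $c_4 \in C_4$; this gives a $4$-cycle $g, c_3, p^0_i, c_4$. So ``triangle-free'' does not imply ``every block is an edge or a vertex.''

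Second, and more seriously, the attached copies of $W_5$ do not remain in separate blocks. A copy of $W_5$ is attached over \emph{every} non-adjacent pair of vertices of $A^*$ (not just over ``basepoints''---there are no $G_i$'s at this stage; those appear only in $\widehat{A^+}$). Each such $W_5$ meets the rest of $A^+$ in two vertices, not one, and since $A^*$ is connected those two vertices are already joined by a path inside $A^*$; the $W_5$ together with that path is $2$-connected, so they lie in a common block. Your parallel with Lemma~\ref{lemma:attachedcopies} fails precisely because there the $G_i$'s are attached at a \emph{single} vertex. Concretely, if $a, b, c \in A^*$ are pairwise non-adjacent, the three copies of $W_5$ attached over $\{a,b\}$, $\{a,c\}$, $\{b,c\}$ already form a single $2$-connected subgraph. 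In fact $A^+$ is essentially one large block, so knowing that a homomorphic image of $G_i$ is $2$-connected localizes nothing.

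The paper's proof avoids blocks and works with triangles instead. Since $A^*$ and $B^*$ are triangle-free, every triangle of $A^+$ or $B^+$ lies inside one of the attached $W_5$'s. Every edge of $G_i$ lies in a triangle, and this persists under homomorphism, so every ``old'' edge of the image $H$ lies in some attached $W_5$. One then uses that the image of the hub is adjacent to all other vertices of $H$ to force all the relevant $W_5$'s to pass through that single vertex; since distinct attached $W_5$'s share at most one vertex of $A^*$, removing the hub's image disconnects $H'$, contradicting Lemma~\ref{lemma:core}. The ``every edge lies in a triangle'' property of the $G_i$ is doing the localization work that your block argument was meant to do, but edge by edge rather than globally.
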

\begin{proof}
As neither $A^*$ nor $B^*$ contain triangles, the only triangles in $A^+, B^+$ are in the copies of $W_5$ we have added. Thus there are no copies of $K_4$. Now suppose there is some $\phi \colon G_i \to H$, with $H$ embedding in $A^+$ or $B^+$. An edge $(u,v)$ of $H$ will be \emph{old} if $G_i$ contains an edge between some element of $\phi^{-1}(u)$ and some element of $\phi^{-1}(v)$, and otherwise the edge will be \emph{new}.

Let $H'$ be the graph $H$ with all new edges removed. Then every edge of $H'$ is contained in a triangle, and so must be contained in some copy of $W_5$, and so the same is true for the vertices of $H$. Since $H$ cannot be contained in a single copy of $W_5$, as the $\set{W_i}$ form an antichain under homomorphism, it must be contained in the union of multiple copies of $W_5$, say $W^1, \dots, W^n$. Since $H$ contains a vertex adjacent to all others, all the $W^i$ must intersect at a single point, and so are otherwise pairwise disjoint. But $H'$ is 2-connected, and no subgraph of $W^1 \cup \dots \cup W^n$ such that every edge connects two points in the same $W^i$ will be, unless contained within a single $W^i$.
\end{proof}

We now shift from the language with unary predicates to the pure graph language. Given the choice of $(G_i, a_i)$ to encode unary predicates, for any choice of tiling problem $\TT$ we get a hereditary graph class $\HH_\TT$, which has the JEP if and only if $\TT$ has a solution. We wish to add extra constraints to this graph class. In particular we wish to forbid $K_4$ and proper homomorphic images of the $\set{G_i}$, for $i \leq 14$. (We choose $i = 14$ because our original construction in a language with unary predicates used 12 unary predicates. We have added another predicate $C_5$ in this section, and require a ``dummy'' predicate for the translation to the pure graph language.) We will call the resulting hereditary graph class $\HH_\TT^+$.

\begin{notation}
Recall the function $\wedge$ from Definition \ref{def:wedge}. As before, we will use $\widehat{G}$ to denote $\wedge(G)$.
\end{notation}

\begin{lemma} \label{lemma:augmentedCanonical}
Let $\widehat{A^+}, \widehat{B^+}$ be the canonical models in the pure graph language, obtained by applying the function $\wedge$ to $A^+,B^+$. Then $\widehat{A^+}, \widehat{B^+}$ do not contain copies of $K_4$ or any proper homomorphic images of the $\set{G_i}$, and so are in $\HH_\TT^+$.
\end{lemma}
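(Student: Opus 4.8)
The plan is to prove this by tracking which subgraphs can appear in $\widehat{A^+}$ and $\widehat{B^+}$, using the block structure and the results already established for $A^+, B^+$. First I would recall that $\widehat{A^+} = \wedge(A^+)$ is obtained from $A^+$ by freely attaching, at the basepoint of each colored vertex, a copy of the corresponding $G_i = W_{2i+5}$. Since the $G_i$ are $2$-connected (Fact \ref{fact:wheel}) and freely attached, the blocks of $\widehat{A^+}$ are precisely: the blocks of $A^+$ itself, plus the attached copies of the various $W_{2i+5}$; and likewise the blocks of $A^+$ are the copies of $W_5$ added during augmentation together with the edges (and coding-vertex paths) already present in $A^*$, which carry no triangles. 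The analogous decomposition holds for $\widehat{B^+}$.

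The key step is then a case analysis on where a hypothetical bad subgraph could sit. For $K_4$: since $K_4$ is $2$-connected, any copy lies inside a single block. By Lemma \ref{lemma:unaryCanonical}, $A^+$ and $B^+$ contain no $K_4$, so no block coming from $A^+$ (resp. $B^+$) contains one; and the attached blocks are copies of odd wheels $W_{2i+5}$ with $2i+5 \geq 7$, none of which contains $K_4$ (a wheel $W_n$ with $n \geq 5$ has no $K_4$, since deleting the hub leaves a cycle). For a proper homomorphic image of $G_i$: by Lemma \ref{lemma:core}, any homomorphic image of $W_i$ is $2$-connected, hence again lies inside a single block. A block of $\widehat{A^+}$ is either an attached wheel $W_j$, or a block of $A^+$. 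If the image embeds into an attached $W_j$, then since the $\set{W_i}$ form an antichain under homomorphism (Lemma \ref{lemma:wheel anti}) and a proper homomorphic image of $W_{2i+5}$ is not isomorphic to any $W_j$ — it has at least two vertices of degree $>3$, or (being a proper image of a core, again by Lemma \ref{lemma:wheel anti}) is a strictly smaller-structure — it cannot embed into $W_j$. If instead the image lies in a block of $A^+$, apply Lemma \ref{lemma:unaryCanonical}, which already forbids all homomorphic images of the $\set{G_i}$ in $A^+$; the same works for $B^+$.

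Finally I would conclude membership in $\HH_\TT^+$. The class $\HH_\TT^+$ was obtained from $\HH_\TT$ by additionally forbidding $K_4$ and the proper homomorphic images of the $G_i$ for $i \leq 14$; we have just checked $\widehat{A^+}, \widehat{B^+}$ avoid these, and they already lie in $\HH_\TT$ since $A^+, B^+$ are augmentations of the canonical models $A^*, B^*$ (whose $\wedge$-images are in the graph class by the construction of $\HH_\TT$, the augmentation only adding $C_5$-marked copies of $W_5$ which introduce no forbidden subgraph of $\HH_\TT$). Hence $\widehat{A^+}, \widehat{B^+} \in \HH_\TT^+$.

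The main obstacle I anticipate is the proper-homomorphic-image case: one must be careful that a proper homomorphic image of $W_{2i+5}$, even though it is not one of the $W_j$, still cannot \emph{embed} into some other attached wheel $W_j$ or into the augmentation-wheels $W_5$. The clean way around this is exactly the block argument combined with Lemmas \ref{lemma:core}, \ref{lemma:wheel anti} and \ref{lemma:unaryCanonical}: $2$-connectedness confines everything to one block, and within any single wheel-block the antichain property (for homomorphisms) together with the fact that odd wheels are cores rules out the image appearing, while within the $A^+$/$B^+$-blocks Lemma \ref{lemma:unaryCanonical} does the work directly.
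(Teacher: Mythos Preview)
Your proposal is correct and follows essentially the same route as the paper: use $2$-connectedness (via Lemma~\ref{lemma:core}) to confine any offending subgraph to a single block, then handle the attached-wheel blocks with the core/antichain properties (Lemma~\ref{lemma:wheel anti}) and the $A^+,B^+$ blocks with Lemma~\ref{lemma:unaryCanonical}. The only cosmetic difference is that the paper dispatches the attached-wheel case in one line (``the $\{G_i\}$ are cores and form an antichain under homomorphisms''), whereas you argue via degree counts; the cleaner way to phrase your point is simply that an embedding of a proper homomorphic image $H$ of $G_i$ into $G_j$ would compose to a homomorphism $G_i\to G_j$, which the antichain forbids for $i\neq j$ and the core property forbids for $i=j$.
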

\begin{proof}
As $K_4$ and any homomorphic images of the $\set{G_i}$ are 2-connected, if one of them is contained in $\widehat{A^+}$ or $\widehat{B^+}$ then it must be contained in a single block. We know they are not contained in any of the copies of $\set{G_i}$ attached by $\wedge$ as the $\set{G_i}$ are cores and form an antichain under homomorphisms, so they must have been present in $A^+,B^+$. But by Lemma \ref{lemma:unaryCanonical}, we know this is not the case.
\end{proof}

As we already know $\HH_\TT$ has the JEP when $\TT$ has a solution, to check that $\HH_\TT^+$ has the JEP, it suffices to check that our joint embedding procedure for $\HH_\TT$ does not create any new copies of $K_4$ or homomorphic images of $\set{G_i}$.

Recall the two steps of our joint embedding procedure in the pure graph language. First, for every vertex $v$ such that there is no $i$ such that $v$ is in a copy of $G_i$ free over its basepoint, we attach a copy of $G_k$ freely over $v$, which gets identified with the basepoint, where $G_k$ represents a unary predicate specially reserved for this completion process (in our case, $k = 14$). We may then interpret the resulting graph in the language with unary predicates, and in the next step we add edges as we would have done there.

\begin{lemma} \label{lemma:augmentedJEP}
Let $\TT$ be a tiling problem with a solution, and suppose $A, B \in \HH_\TT^+$. Then applying our joint embedding procedure to $A,B$ creates no homomorphic images of any of the $\set{G_i}_{i \leq 14}$ except for copies of $G_{14}$, nor any copies of $K_4$, and so produces a graph in $\HH_\TT^+$.
\end{lemma}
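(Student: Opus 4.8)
The plan is to track what the two stages of the joint embedding procedure do, exactly as in the proof of Theorem \ref{theorem:inducedJEP}, but now also keeping an eye on the copies of $W_5$ that the augmentation added and on the new $K_4$ and homomorphic-image constraints. Let $C_0$ be the disjoint union $A \sqcup B$, let $C_1$ be the graph after stage one (attaching a copy of $G_{14}$ freely over every vertex not already in a free copy of some $G_i$), and let $C$ be the final graph after stage two (adding the grid--tile edges prescribed by the fixed solution $\theta$, working in the interpreted language with unary predicates). Since $A, B \in \HH_\TT^+$ already contain no copies of $K_4$ and no proper homomorphic images of the $\set{G_i}_{i \le 14}$, and $C_0$ is a disjoint union, $C_0$ also contains none; so it suffices to show neither stage introduces one.

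For stage one, the argument is exactly that of Lemma \ref{lemma:attachedcopies} together with Lemma \ref{lemma:unaryCanonical}: we only attach copies of $G_{14}$, each freely over its basepoint, so the block decomposition of $C_1$ consists of the blocks of $C_0$ together with these freely attached copies of $G_{14}$. Any copy of $K_4$ or any homomorphic image $\phi(G_i)$ is $2$-connected (by Lemma \ref{lemma:core} for the homomorphic images), hence lies inside a single block. It cannot lie inside one of the attached $G_{14}$'s, since $G_{14} = W_{33}$ is a core (Lemma \ref{lemma:wheel anti}) and the $\set{W_n}$ form an antichain under homomorphism, so the only homomorphic image of $G_i$ inside $G_{14}$ is $G_{14}$ itself when $i = 14$, and $K_4$ does not embed into $W_{33}$; so such a configuration would already have been in $C_0$, contradiction. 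Hence $C_1$ still contains no $K_4$ and no proper homomorphic image of any $G_i$, $i \le 14$, except genuine copies of $G_{14}$.

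For stage two, I would reuse the structure of Lemmas \ref{lemma:Con7}--\ref{lemma:Con8}: stage two only adds edges from $G^0$-vertices to $T^1$-vertices, and (as noted in the proof of Theorem \ref{theorem:inducedJEP}) it adds an edge to at most one tile vertex in each tile set, and by constraints \ref{c:5}--\ref{c:6} there are no edges between tile vertices in distinct tile sets; consequently the procedure creates no triangle at all. Now suppose $C$ contained a copy of $K_4$ or a proper homomorphic image $\phi(G_i)$, $i \le 14$, not already present in $C_1$. Every edge of $K_4$ lies in a triangle, and — by the argument in Lemma \ref{lemma:unaryCanonical}, deleting the ``new'' edges of $\phi(G_i)$ and using that every edge of $G_i = W_{2i+5}$ lies in a triangle — every vertex of $\phi(G_i)$ lies on a triangle as well. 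But the new edges all lie between $G^0$-vertices and $T^1$-vertices, and these new edges are in no triangle, so removing them cannot disconnect a $2$-connected subgraph all of whose vertices must still lie on triangles; hence the whole configuration already sat inside $C_1$ with no new edges, contradiction. So $C$ contains no $K_4$ and no proper homomorphic image of any $G_i$ with $i \le 14$ other than copies of $G_{14}$, and therefore $C \in \HH_\TT^+$.

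The main obstacle is the stage-two argument: one must be careful that the ``only triangles are the old ones'' observation is genuinely enough to force a $2$-connected triangle-rich subgraph back into $C_1$. The cleanest way is to argue as in Lemma \ref{lemma:unaryCanonical} — pass to the subgraph obtained by deleting all newly added grid--tile edges, observe every edge of that subgraph lies in a triangle hence is old, and note that a $2$-connected graph cannot be reconstituted from such a subgraph by re-adding triangle-free edges without those edges being cut-edges, which $2$-connectedness forbids. I would also double-check the bookkeeping that $i \le 14$ is exactly the right range (twelve original unary predicates, the predicate $C_5$ added by augmentation, and the dummy predicate $G_{14}$ used for the completion), so that $G_{14}$ is indeed the one whose fresh copies are permitted.
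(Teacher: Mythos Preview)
Your overall plan matches the paper's proof exactly: analyze stage one via the block decomposition and the core/antichain properties of the $\{G_i\}$, then handle stage two using the fact that the grid--tile edges added are never in a triangle. Stage one is fine, and your treatment of $K_4$ in stage two is correct.

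The gap is in your stage-two argument for proper homomorphic images. You try to argue that since $H = \phi(G_i)$ is $2$-connected and the procedure-added edges are triangle-free, those edges would have to be cut-edges of $H$, which $2$-connectedness forbids. That implication is simply false: in $C_4$ every edge is triangle-free and none is a cut-edge. More generally, nothing prevents a $2$-connected graph from having an edge-cut consisting entirely of triangle-free edges, so your argument does not force $H$ back into $C_1$. You also slide between two different meanings of ``new edge'' --- edges added by the joint-embedding procedure versus edges of $H$ that are not images of edges of $G_i$ --- and this conflation is what makes the argument look plausible.

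The paper's fix is short and uses connectedness rather than $2$-connectedness. Call an edge of $H$ \emph{old} if it is the image under $\phi$ of an edge of $G_i$. Every edge of $G_i$ lies in a triangle, so every old edge of $H$ lies in a triangle, hence no old edge is procedure-added. The subgraph $H'$ of $H$ on the old edges is the image of the (connected) graph $G_i$, so $H'$ is connected and spans $V(H)$; since $H' \subseteq A' \sqcup B'$, all of $V(H)$ lies in a single factor. But the procedure adds no edges within a factor, so every edge of $H$ already lies in that factor, i.e.\ $H$ was present in $C_1$ --- contradiction. Replacing your $2$-connectedness step with this connectedness-of-$H'$ observation completes the proof.
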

\begin{proof}
In the first step of our joint embedding procedure, we add copies of $G_{14}$ freely over various vertices. As $K_4$ and homomorphic images of the $\set{G_i}$ are 2-connected, any new copies of these graphs must appear in the attached copies of $G_{14}$. First, $K_4$ does not embed into $G_{14}$. Then, as $G_{14}$ is a core and the $\set{G_i}$ form an antichain under homomorphisms, the only homomorphic image of any of the $\set{G_i}$ embedding in $G_{14}$ is $G_{14}$ itself. 

Let $A'$ and $B'$ be the graphs obtained from $A$ and $B$ as a result of this first step. As the graphs $\set{G_i}$ and $K_4$ are connected, no copies of $K_4$ or the $\set{G_i}$ are created by passing to the disjoint union $A' \sqcup B'$. We now continue on to the second step of our joint embedding procedure, in which edges between the factors are added to $A' \sqcup B'$. The key point in this step is that no edge we add is contained in a triangle. This immediately rules out creating any copies of $K_4$.

Now suppose our joint embedding procedure creates some graph $H$, a homomorphic image of one of the $\set{G_i}$. Let $\phi\colon G_i \to H$ be a homomorphism. We divide the edges of $H$ into two classes. An edge $(u,v)$ of $H$ will be \emph{old} if $G_i$ contains an edge between some element of $\phi^{-1}(u)$ and some element of $\phi^{-1}(v)$, and otherwise the edge will be \emph{new}.

First, note that as all the edges of $G_i$ are contained in a triangle, the same is true for all the old edges of $H$. Thus our joint embedding procedure cannot add any old edges. 

Let $H'$ be the graph $H$ with all the new edges removed. Then $H'$ must be contained in the disjoint union $A' \sqcup B'$. As $H'$ is connected, it must be contained in one of the factors. As our joint embedding procedure does not add edges within a factor, we have $H' = H$, and so $H$ was already present in one of the factors.
\end{proof}

\begin{theorem} \label{theorem:inducedJHP}
There is no algorithm that, given a finite set of forbidden induced subgraphs, decides whether the corresponding hereditary graph class has the JHP.
  
In particular, given a tiling problem $\TT$, $\HH_\TT^+$ has the JHP if and only if $\TT$ has a solution.
\end{theorem}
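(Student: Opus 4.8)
The plan is to reduce the statement about the JHP for $\HH_\TT^+$ to the already-established facts about the JEP, using the preparatory lemmas of this section to show that any witness to the JHP can be upgraded to a witness to the JEP on the canonical models. Concretely, I would argue the two directions separately. For the easy direction, suppose $\HH_\TT^+$ has the JHP; then in particular there is some $C \in \HH_\TT^+$ admitting homomorphisms $f \colon \widehat{A^+} \to C$ and $g \colon \widehat{B^+} \to C$, where $\widehat{A^+}, \widehat{B^+} \in \HH_\TT^+$ by Lemma \ref{lemma:augmentedCanonical}. The crux is to show $f$ and $g$ are in fact embeddings. Since $C$ forbids $K_4$ and all proper homomorphic images of the $\set{G_i}$ with $i \leq 14$, any homomorphism must restrict to an embedding on each attached copy of $G_i$ inside $\widehat{A^+}$ (a proper homomorphic image of $G_i$ would create a forbidden subgraph, using that the $\set{G_i}$ are cores and form an antichain under homomorphism, Lemma \ref{lemma:wheel anti}). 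Then the $\HH_1$-type constraints force that no non-basepoint of an attached $G_i$ can be glued to or joined with anything outside its copy, and the augmenting copies of $W_5$ over non-adjacent basepoint pairs (which survive in $\widehat{A^+}$) force, via the $K_4$-freeness, that distinct basepoints cannot be identified and no edge can be added between them — because any such identification or new edge would complete a $K_4$ inside the augmenting $W_5$, by the Fact that every proper homomorphic image of $W_5$ contains a $K_4$. Hence $f$ and $g$ are injective and edge-reflecting, i.e. embeddings, so the induced substructure of $C$ on $f(\widehat{A^+}) \cup g(\widehat{B^+})$ witnesses the JEP for $\widehat{A^+}, \widehat{B^+}$; passing back through $\vee$ and Lemma \ref{lemma:transfer}-style reasoning, $\GG_\TT$ has the JEP, so by Proposition \ref{prop:unaryundecidable} $\TT$ has a solution.

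For the converse, suppose $\TT$ has a solution. We already know from the proof of Theorem \ref{theorem:inducedJEP} (via Lemma \ref{lemma:transfer} applied to $\GG_\TT^*$) that $\HH_\TT$ has the JEP, so given $A, B \in \HH_\TT^+ \subseteq \HH_\TT$ we may run the joint embedding procedure to obtain a graph $C$ embedding both. The only thing to check is that $C \in \HH_\TT^+$, i.e. that the procedure introduces no copy of $K_4$ and no proper homomorphic image of any $G_i$ with $i \leq 14$. But this is exactly the content of Lemma \ref{lemma:augmentedJEP}: the first step only attaches copies of $G_{14}$ (which contain no $K_4$ and, by coreness and antichain-ness, no proper homomorphic images of any $G_i$), and the second step adds only edges not contained in any triangle, which can neither create a $K_4$ nor an old edge of any homomorphic image of $G_i$, while the new-edge analysis shows any such homomorphic image would already have been present in a single factor. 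Since an embedding is in particular a homomorphism, $C$ witnesses the JHP for $A, B$.

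The main obstacle, and where I would spend the most care, is the claim in the easy direction that a homomorphism $f \colon \widehat{A^+} \to C$ with $C \in \HH_\TT^+$ is forced to be an embedding. This requires assembling three separate rigidity mechanisms — the coreness/antichain behaviour of the $\set{G_i}$ handling rigidity \emph{within} each attached copy, the $\HH_1$ constraints handling non-basepoints, and the augmenting $W_5$'s together with $K_4$-freeness handling the basepoints — and checking that they interact correctly, in particular that $f$ cannot, say, collapse two attached copies of the \emph{same} $G_i$ onto each other (ruled out because their basepoints would be identified, which the $W_5$/$K_4$ argument forbids when the basepoints are non-adjacent, and a separate small argument is needed when they happen to be adjacent in $\widehat{A^+}$) or map a vertex of $\widehat{A^+}$ to a vertex of $C$ lying in some newly-appearing structure. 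One must also be slightly careful that the argument is about the \emph{canonical models} $\widehat{A^+}, \widehat{B^+}$ specifically, since the rigidity uses features (the augmenting $W_5$'s, the absence of triangles outside them) that general members of $\HH_\TT^+$ need not have; this is acceptable because, as in Proposition \ref{prop:nonEdgeJHP}, it is enough that the JHP witness for the canonical models be an embedding to read off a tiling.
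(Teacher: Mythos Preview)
Your proposal is essentially the paper's proof: the three rigidity mechanisms you isolate (coreness/antichain of the $G_i$ within a copy, the $\HH_1$ constraint for non-basepoints, and the augmenting $W_5$'s plus $K_4$-freeness for basepoints) are exactly how the paper argues that any homomorphism of $\widehat{A^+}$ or $\widehat{B^+}$ into $C$ is an embedding, and the other direction is just Lemma~\ref{lemma:augmentedJEP} followed by ``JEP implies JHP''. The adjacent-basepoint case you flag is immediate: graph homomorphisms cannot identify adjacent vertices (by irreflexivity of $E$), and an edge is already present, so nothing further is needed.

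There is one small misstep in your write-up of the JHP $\Rightarrow$ tiling direction. After showing $\widehat{A^+}$ and $\widehat{B^+}$ embed in $C$, you write ``passing back through $\vee$ and Lemma~\ref{lemma:transfer}-style reasoning, $\GG_\TT$ has the JEP''. Lemma~\ref{lemma:transfer} transfers the \emph{full} JEP between the two classes; knowing only that a single pair of structures jointly embeds does not give you the JEP for $\GG_\TT$, so that intermediate claim is not justified as stated. The paper avoids this detour entirely: once $\widehat{A^+}, \widehat{B^+}$ embed in some $C \in \HH_\TT^+$, the (encoded) constraints~\ref{c:7}--\ref{c:9} hold in $C$, and one reads off a solution to $\TT$ directly from $C$ exactly as in Section~\ref{sec:JEPtoTiling}. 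Replace your appeal to Lemma~\ref{lemma:transfer} with this direct argument and the proof is complete.
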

\begin{proof}
First, suppose $\TT$ has a solution. Then by Lemma \ref{lemma:augmentedJEP}, $\HH_\TT^+$ has the JEP, and thus the JHP.

Now suppose $\HH_\TT^+$ has the JHP. Then there is some $C \in \HH_\TT^+$ that $\widehat{A^+}, \widehat{B^+}$ both have homomorphisms into. We now wish to argue any homomorphism of $\widehat{A^+}$ into $C$ must be an embedding, and similarly for $\widehat{B^+}$.

Consider taking a homomorphism of $\widehat{A^+}$ whose image must be in $\HH_\TT^+$.  We cannot identify or add edges between any two basepoints of any of the $\set{G_i}$, as they are either already adjacent or have a copy of $W_5$ freely joined over them, so the identification or new edge would create a copy of $K_4$. We cannot identify any non-basepoint of a copy of one of the $\set{G_i}$ with any point outside of that copy of $G_i$ as that would create an edge incident to the non-basepoint, forbidden by $\HH_1$ (Definition \ref{def:HHConstraints}), unless we identified the entire copy of $G_i$ with another copy of $G_i$; however the latter is forbidden as the basepoints cannot be identified. We also cannot add an edge to a non-basepoint from outside the copy of $G_i$ it is in. Finally, we cannot add edges or identify points within a given copy of one of the $\set{G_i}$, since all proper homomorphic images of the $\set{G_i}$ are forbidden.

Thus $\widehat{A^+}, \widehat{B^+}$ actually embed in $C$, and as in Section \ref{sec:JEPtoTiling} this must encode a solution to $\TT$.
\end{proof}

\section{Questions}
Closely related to Theorem \ref{theorem:inducedJEP} is the JEP for monotone graph classes, i.e. those specified by forbidden (non-induced) subgraphs.

\begin{question}[\cite{CSS}, after Example 6] \label{q:monotone}
    Is there an algorithm that, given finite set of forbidden subgraphs, decides whether the corresponding monotone graph class has the joint embedding property?
\end{question}

When forbidding non-induced subgraphs, the constraints cannot force a joint embedding procedure to add edges between factors. Thus, when attempting to adapt the proof of Theorem \ref{theorem:inducedJEP}, the only option for tiling a grid point is through identifying points in different factors. This is a violent act that forces various uniqueness restrictions on the construction. 

In addition to the interest due to the required change in approach, Question \ref{q:monotone} arises in the program for deciding whether a monotone graph class admits a countable universal graph, as laid out in \cite{CSS}. Since the JEP is necessary for the existence of such a countable universal graph, it would seem to be a preliminary consideration. In \cite{CSS}, the additional complication of considering Question \ref{q:monotone} is intentionally avoided by assuming the forbidden subgraphs to be connected, so disjoint union serves as a joint embedding procedure. But if the answer to Question \ref{q:monotone} is positive, it would be natural to take a broader view of the decision problem for universality by allowing arbitrary forbidden subgraphs.

As mentioned earlier, the following problem of  Ru\v{s}kuc was the motivation for this paper.

    \begin{question} \label{q:Ruskuc}
    Is there an algorithm that, given finite set of forbidden permutations, decides whether the corresponding permutation class has the joint embedding property?
    \end{question}
    
The main difficulty in working with permutation classes seems to be the transitivity of the orders. When performing joint embedding, once we decide how to relate a single point in each factor to each other, many other such relations are forced by transitivity. This is in contrast to the induced subgraph case, where adding an edge between factors only forces us to add another edge if there is a conflict with one of the forbidden subgraphs, which we have complete control over choosing. 

A partial step towards Question \ref{q:Ruskuc} would be to consider the JEP for \emph{permutation graphs}. Given a permutation, this is the corresponding graph with the same vertex set and with edges defined by $xEy$ if and only if the two orders disagree between $x$ and $y$.

\begin{question} \label{q:permgraphs}
    Is there an algorithm that, given finite set of forbidden permutations, decides whether the corresponding hereditary permutation graph class has the joint embedding property?
\end{question}

In \cite{Gallai}, Gallai characterized permutation graphs in terms of an infinite family of forbidden induced subgraphs. (The characterization is more easily available in \cite{graphclasses}.) Thus, one could approach Question \ref{q:permgraphs} by attempting to modify the proof of Theorem \ref{theorem:inducedJEP} to avoid the graphs on Gallai's list. 

\acknowledgments
I thank Manuel Bodirsky for suggesting the problem addressed in Section \ref{sec:jhp}, Gregory Cherlin for many discussions on the material in this paper, and Nik Ru\v{s}kuc for helping to enormously improve the paper's presentation. I also thank the referee for many corrections and helpful comments.

The material in this paper appeared in the author's thesis \cite{BraunThesis}.

\nocite{*}
\bibliographystyle{abbrvnat}
\bibliography{jep-bib}

\begin{thebibliography}{10}
\providecommand{\natexlab}[1]{#1}
\providecommand{\url}[1]{\texttt{#1}}
\expandafter\ifx\csname urlstyle\endcsname\relax
  \providecommand{\doi}[1]{doi: #1}\else
  \providecommand{\doi}{doi: \begingroup \urlstyle{rm}\Url}\fi

\bibitem[Berger(1966)]{Berger}
R.~Berger.
\newblock \emph{The undecidability of the domino problem}.
\newblock Number~66. American Mathematical Soc., 1966.

\bibitem[Bodirsky(2012)]{Bodthes}
M.~Bodirsky.
\newblock Complexity classification in infinite-domain constraint satisfaction.
\newblock \emph{arXiv preprint arXiv:1201.0856}, 2012.

\bibitem[Braunfeld(2018)]{BraunThesis}
S.~Braunfeld.
\newblock \emph{Infinite Limits of Finite-Dimensional Permutation Structures,
  and their Automorphism Groups: Between Model Theory and Combinatorics}.
\newblock Phd thesis, Rutgers University, New Brunswick, 2018.

\bibitem[Cherlin(2011)]{WQO}
G.~Cherlin.
\newblock Forbidden substructures and combinatorial dichotomies: {WQO} and
  universality.
\newblock \emph{Discrete Mathematics}, 311\penalty0 (15):\penalty0 1543--1584,
  2011.

\bibitem[Cherlin et~al.(1999)Cherlin, Shelah, and Shi]{CSS}
G.~Cherlin, S.~Shelah, and N.~Shi.
\newblock Universal graphs with forbidden subgraphs and algebraic closure.
\newblock \emph{Advances in Applied Mathematics}, 22\penalty0 (4):\penalty0
  454--491, 1999.

\bibitem[de~Ridder~et al.()]{graphclasses}
H.~de~Ridder~et al.
\newblock Graphclass: permutation.
\newblock URL \url{http://graphclasses.org/classes/gc_23.html}.

\bibitem[Gallai(1967)]{Gallai}
T.~Gallai.
\newblock Transitiv orientierbare graphen.
\newblock \emph{Acta Mathematica Hungarica}, 18\penalty0 (1-2):\penalty0
  25--66, 1967.

\bibitem[Hahn and Tardif()]{HT}
G.~Hahn and C.~Tardif.
\newblock Graph homomorphisms: structure and symmetry.
\newblock URL \url{http://www.mast.queensu.ca/~ctardif/articles/ghss.pdf}.

\bibitem[Ru\v{s}kuc(2005)]{Rusk}
N.~Ru\v{s}kuc.
\newblock Decidability questions for pattern avoidance classes of permutations.
\newblock In \emph{Third International Conference on Permutation Patterns,
  Gainesville, Fla}, 2005.

\bibitem[Vatter(2014)]{Vatter}
V.~Vatter.
\newblock Permutation classes.
\newblock \emph{arXiv preprint arXiv:1409.5159}, 2014.

\end{thebibliography}




\end{document}